%%%%%%%%%%%%%%%%%%%%%%%%%%%%%%%%%%%%%%%%%%%%%%%%%%%%%%%%%%%%%%%%%%%%%%%%%%%%%%%%%%%%%%%%%%%%%%%%%%%%%%%%%%%%%%%%%%%%%%%%%%

%%%%%%%%%%%%%%%%%%%%%%%%%%%%%%%%%%%% April 20, 2010 %%%%%%%%%%%%%%%%%%%%%%%%%%%%%%%%%%%%%%%%%%%%%%%%%%%%%%%%%%%%%%%%%%%%%%

%%%%%%%%%%%%%%%%%%%%%%%%%%%%%%%%%%%%%%%%%%%%%%%%%%%%%%%%%%%%%%%%%%%%%%%%%%%%%%%%%%%%%%%%%%%%%%%%%%%%%%%%%%%%%%%%%%%%%%%%%

\documentclass{amsart}
      \usepackage{amsmath,amssymb,amsfonts,enumerate,amsthm,graphicx}

      \newcommand{\Spec}{\mbox{Spec}\,}

      \newcommand{\depth}{\mbox{depth}\,}
      
      \renewcommand{\dim}{\mbox{dim}\,}
      \newcommand{\lk}{\mbox{lk}\,}

      \newcommand{\V}{\mbox{V}}

      \newcommand{\fm}{\mathfrak{m}}
      \newcommand{\fp}{\mathfrak{p}}

      \newtheorem{thm}{Theorem}[section]
      \newtheorem{cor}[thm]{Corollary}
      \newtheorem{lem}[thm]{Lemma}
      \newtheorem{prop}[thm]{Proposition}
      \newtheorem{defn}[thm]{Definition}

      \newtheorem{rem}[thm]{Remark}
      \newtheorem{ques}[thm]{Question}

      \numberwithin{equation}{section}
      
      \begin{document}
      \bibliographystyle{amsplain}
      %\date{}
      \title{Sequentially $S_r$ simplicial complexes and sequentially
      $S_2$ graphs}
      \author{Hassan Haghighi}

      \address{Hassan Haghighi\\Department of Mathematics, K. N. Toosi
      University of Technology, Tehran, Iran.}

      \author{Naoki Terai}
      \address{Naoki Terai\\Department of Mathematics, Faculty of culture and Education,
      SAGA University, SAGA 840-8502, Japan}

      \author{Siamak Yassemi}
      \address{Siamak Yassemi\\School of Mathematics, Statistics \&
      Computer Science, University of
      Tehran, Tehran Iran.}
      \author{Rahim Zaare-Nahandi}
      \address{Rahim Zaare-Nahandi\\School of Mathematics, Statistics \&
      Computer Science, University of Tehran, Tehran, Iran.}
      \thanks{H. Haghighi was supported in part by a grant from K. N. Toosi University of Technology}
      %\thanks{The research of S. Yassemi and R. Zaare-Nahandi was in part
      %supported by a grant from the University of Tehran}

      \thanks{Emails: haghighi@kntu.ac.ir, terai@cc.saga-u.ac.jp, yassemi@ipm.ir, rahimzn@ut.ac.ir}

      \keywords{Sequentially Cohen-Macaualy, Serre's condition, Sequentially $S_r$ simplicial complex}

      \subjclass[2000]{13H10, 05C75}

      \begin{abstract}

      \noindent We introduce sequentially $S_r$ modules over a commutative graded ring and
      sequentially $S_r$ simplicial complexes. This generalizes two properties
      for modules and simplicial complexes: being sequentially
      Cohen-Macaulay, and satisfying Serre's condition $S_r$. In
      analogy with the sequentially Cohen-Macaulay property,
      we show that a simplicial complex is sequentially $S_r$ if and
      only if its pure $i$-skeleton is $S_r$ for all $i$.
      For $r=2$, we provide a more relaxed characterization. As an
      algebraic criterion, we prove that a
      simplicial complex is sequentially $S_r$ if and only if the
      minimal free resolution of the ideal of its Alexander dual is
      componentwise linear in the first $r$ steps. We apply these
      results for a graph, i.e., for the simplicial complex of
      the independent sets of vertices of a graph. We characterize sequentially $S_r$ cycles showing
      that the only sequentially $S_2$ cycles are odd cycles and, for $r\ge 3$, no cycle is
      sequentially $S_r$ with the exception of cycles of length $3$ and $5$.
      We extend certain known results
      on sequentially Cohen-Macaulay graphs to the case of sequentially $S_r$ graphs.
      We prove that a bipartite graph is vertex decomposable if and only
      if it is sequentially $S_2$.  We provide some
      more results on
      certain graphs which in particular implies that any graph with no chordless even
      cycle is sequentially $S_2$. Finally, we propose some
      questions.

      \end{abstract}

      \maketitle

      \section{Introduction}

      Let $R=k[x_1,\ldots, x_n]$ be the polynomial ring over a field $k$.
      For finitely generated graded $R$-modules, Stanley has defined
      the sequentially Cohen-Macaulay property \cite[Chapter III, Definition 2.9]{St-95}
      and has studied the corresponding simplicial complexes. Here we consider
      sequentially $S_r$ graded modules, i.e., finitely generated graded
      $R$-modules which satisfy Serre's $S_r$ condition sequentially. Then
      we study the corresponding simplicial complexes, sequentially $S_r$
      simplicial complexes. Duval has shown that a simplicial complex
      is sequentially Cohen-Macaulay if and only if its pure $i$-skeleton
      is Cohen-Macaulay for all $i$ \cite[Theorem 3.3]{Du-96}. We
      prove the analogue result for
      sequentially $S_r$ simplicial complexes (see Theorem \ref{thm:pure skeletons}).
      For $r=2$, we show that a simplicial complex is sequentially $S_2$
      if and only if its pure $i$-skeletons are connected for all $i\ge 1$
      and the link of every singleton is sequentially $S_2$ (see Theorem
      \ref{thm:localcriteria}).
      A major result of Eagon and Reiner states that a simplicial
      complex is Cohen-Macaulay if and only if the Stanley-Reisner ideal
      of its Alexander dual has a
      linear resolution \cite[Theorem 3]{Ea-Re-98}. Later, Herzog and Hibi
      generalized this result by proving that a simplicial complex is
      sequentially Cohen-Macaulay if and only if the minimal free
      resolution of the Stanley-Reisner ideal of its Alexander dual is
      componentwise linear \cite[Theorem 2.9]{He-Hi-99}. The result of
      Eagon and Reiner has been generalized in another direction by Yanagawa
      (with N. Terai) showing
      that a simplicial complex is $S_r$ if and only if the
      minimal free resolution of its Alexander dual is linear in the first
      $r$ steps \cite[Corollary 3.7]{Ya-2000}. We adopt the above two
      results to show that a simplicial complex is sequentially $S_r$
      if and only if the minimal free resolution of the
      Stanley-Reisner ideal of its Alexander dual is componentwise linear
      in the first $r$ steps (see Corollary \ref{Cor:Square_Component}).

      As the first application of our results, we characterize sequentially $S_r$ cycles.
      It is known that the only cycles which are sequentially
      Cohen-Macaulay are $C_3$ and $C_5$
      \cite[Proposition 4.1]{Fr-Va-07} and the only cycles which are  $S_2$,
       are $C_3$, $C_5$ and $C_7$
      \cite[Proposition 1.6]{H-Y-Z-09}.
      We extend these results by showing that $C_n$ is sequentially $S_2$ if and only
      if $n$ is odd  and the only sequentially $S_3$ cycles are $C_3$ and
      $C_5$, i.e., the Cohen-Macaulay cycles (see Theorem \ref{cycles} and Proposition
      \ref{S3cycles}).

      Van Tuyl and Villarreal \cite{V-V-08} have studied
      sequentially Cohen-Macaulay graphs.
      We extend some of their results
      and generalize a result of Francisco and H\`{a} \cite[Theorem
      4.1]{Fr-Ha-08} on graphs with whiskers (see Corollary
      \ref{whisker}).

      Van Tuyl \cite[Theorem  2.10]{V-09} has recently proved that a bipartite graph is
      vertex decomposable if and only if it is sequentially Cohen-Macaulay. We prove
      that a bipartite graph is vertex decomposable if and only if it is
      sequentially $S_2$ (see Theorem \ref{chordlessgeneral} and Corollary \ref{chordlessclass} ).
      This result also generalizes
      the authors'
      result which states that a bipartite graph is Cohen-Macaulay if and
      only if it is $S_2$ \cite[Theorem 1.3]{H-Y-Z-09}.

      Woodroofe \cite[Theorem 1.1]{Wo-08} has proved that a graph with no
      chordless
      cycles other that cycles of length $3$ and $5$ is sequentially
      Cohen-Macaulay. We extend this result for $S_2$ property (see Theorem
      \ref{chordlesswisker}). This in
      particular implies that any graph with no chordless even cycle is
      sequentially $S_2$ (see Corollary \ref{chordless}).

      At the end of this paper we propose two questions on
      sequentially $S_r$ property of join of two simplicial
      complexes and topological invariance of $S_r$, respectively.\\

      The motivation behind our work is the general philosophy that
      Serre's $S_r$ condition plays an important role, not only in
      algebraic geometry and commutative algebra, but also in
      algebraic combinatorics (e.g. see \cite{Sch-82},
      \cite{Ya-2000}, \cite{Ter-07}).

      \section{Criteria for sequentially $S_r$ simplicial complexes}

      In this section we give some basic definitions and
      criteria for sequentially $S_r$ property on simplicial complexes.
      We prove that a simplicial complex is sequentially $S_r$ if and only if
      its pure skeletons are all $S_r$, a generalization of Duval's result
      on sequentially Cohen-Macaulay simplicial complexes \cite[Theorem 3.3]{Du-96}.
      We show that a simplicial complex is sequentially $S_2$
      if and only if its pure $i$-skeletons are connected for all $i\ge 1$
      and the link of every singleton is sequentially $S_2$.\\

      Recall that a finitely generated graded module $M$ over a Noetherian graded
      $k$-algebra $R$ is said to satisfy the Serre's condition $S_r$
      if \[ \depth M_{\mathfrak p} \ge \min(r, \dim M_{\fp}), \] for all ${\fp} \in
      \Spec(R).$\\

      First we bring the definition of sequentially $S_r$ modules.

      \begin{defn} Let $M$ be a finitely-generated $\mathbb{Z}$-graded module
      over
      a standard graded $k$-algebra $R$ where $k$ is a field. For a
      positive integer $r$ we say that $M$ is sequentially $S_r$ if there
      exists a finite filtration
      $$0=M_0\subset M_1\subset \ldots \subset M_t=M$$
      of $M$ by graded submodules $M_i$ satisfying the two conditions

      (a) Each quotient $M_i/M_{i-1}$ satisfies the $S_r$ condition of Serre.

      (b) ${\rm dim}(M_1/M_0) < {\rm dim}(M_2/M_1) < \ldots < {\rm
      dim}(M_t/M_{t-1}).$
      \end{defn}

      We say that a simplicial complex $\Delta$ on $[n]=\{1,\ldots,n\}$
      is sequentially $S_r$ (over a field $k$) if its face ring
      $k[\Delta]=k[x_1,\ldots,x_n]/I_\Delta$, as a module over $R=k[x_1,\ldots,x_n]$,
      is sequentially $S_r$.

      This is a natural generalization of a $S_r$ simplicial complex,
      i.e., when $k[\Delta]$ satisfies the $S_r$ condition of Serre.

      Since $k[\Delta]$ is a reduced ring, it always satisfies $S_1$
condition.
      Thus, throughout this paper we will always deal with $S_r$ for
$r\geq 2$.

      Using a result of Schenzel \cite[Lemma 3.2.1]{Sch-82} and
      Hochster's formula on local cohomology modules,
      N. Terai has formulated the analogue of Reisner's criterion for Cohen-Macaulay
      simplicial complexes in the case of $S_r$ simplicial complexes
      \cite[page 4, following Theorem 1.7]{Ter-07}. According to this formulation,
      a simplicial complex $\Delta$ of dimension $d-1$ is $S_r$ if
      and only if for all $-1\leq i\leq r-2$ and all $F\in \Delta$ (including
      $F=\emptyset$) with $\#F\leq d-i-2$ we have $\widetilde{H}_i({\rm lk}_\Delta
      F;k)=0$, where ${\rm lk}_\Delta F= \{G\in\Delta: F\cup G\in \Delta, F\cap G
      =\emptyset\}$. For $i=-1$ the vanishing condition is equivalent to
      purity of $\Delta$ and for $i=0$ it is equivalent to the connectedness of
      ${\rm lk}_\Delta F$ \cite[page 4 and 5]{Ter-07}.

      By this characterization of $S_r$ simplicial complexes it follows that
      the $S_r$ property carries over links.

      \begin{lem} \label{lem:linkage} Let $\Delta$ be a $d-1$ dimensional
      simplicial complex
      which satisfies the $S_r$ condition. Then for each $F\in \Delta$ the simplicial
      complex $\lk_{\Delta}F$  also satisfies $S_r$.
      \end{lem}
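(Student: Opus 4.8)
The plan is to reduce everything to the combinatorial characterization of the $S_r$ condition stated in the excerpt, namely that a $(d-1)$-dimensional complex $\Delta$ is $S_r$ if and only if $\widetilde{H}_i(\lk_\Delta G;k)=0$ for all $-1\le i\le r-2$ and all faces $G\in\Delta$ with $\#G\le d-i-2$. The key observation that makes this work is the standard transitivity of links: for $F,G$ with $F\cap G=\emptyset$ one has $\lk_{\lk_\Delta F}G=\lk_\Delta(F\cup G)$. So iterated links of a fixed $F$ are themselves links in the ambient $\Delta$, and the vanishing conditions we need for $\lk_\Delta F$ will be read off from vanishing conditions already guaranteed for $\Delta$.

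First I would fix $F\in\Delta$ and let $e=\dim\lk_\Delta F+1$ be the appropriate dimension parameter for the link, so that I must verify $\widetilde{H}_i(\lk_{\lk_\Delta F}G;k)=0$ for every $G\in\lk_\Delta F$ with $\#G\le e-i-2$ and every $-1\le i\le r-2$. Using the transitivity identity, $\lk_{\lk_\Delta F}G=\lk_\Delta(F\cup G)$, so the condition becomes a vanishing statement about links of the faces $F\cup G$ in $\Delta$ itself. Since $\Delta$ is $S_r$, Terai's criterion gives $\widetilde{H}_i(\lk_\Delta(F\cup G);k)=0$ whenever $\#(F\cup G)=\#F+\#G\le d-i-2$. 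The whole proof therefore hinges on checking the bookkeeping on dimensions and cardinalities, i.e.\ that $\#G\le e-i-2$ forces $\#F+\#G\le d-i-2$.

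The main obstacle, and the only real content, is this dimension count. I would use the elementary relation $\dim\lk_\Delta F=d-1-\#F$ coming from purity (or more precisely from the fact that $\Delta$, being $S_r$ with $r\ge 2$, is pure, so every facet has dimension $d-1$ and hence $e=\dim\lk_\Delta F+1=d-\#F$). Granting $e=d-\#F$, the hypothesis $\#G\le e-i-2=d-\#F-i-2$ rearranges exactly to $\#F+\#G=\#(F\cup G)\le d-i-2$, which is precisely the cardinality bound needed to invoke the $S_r$-vanishing for $\Delta$ on the face $F\cup G$. I would note that purity of $\Delta$ is itself the $i=-1$ case of the criterion, so it is available, and that the same count simultaneously confirms $\lk_\Delta F$ is pure. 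The range of $i$ is identical on both sides, so no separate analysis is required there.

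Putting these together: for each admissible pair $(i,G)$ the transitivity identity converts the required vanishing for $\lk_\Delta F$ into a vanishing known for $\Delta$, the dimension count guarantees the cardinality hypothesis is met, and Terai's criterion then yields the conclusion. Hence $\lk_\Delta F$ satisfies all the vanishing conditions of the $S_r$ criterion and is therefore $S_r$, which is what we wanted to prove.
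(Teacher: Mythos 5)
Your proof is correct and follows essentially the same route as the paper: both reduce to Terai's homological criterion, use the transitivity identity $\lk_{\lk_\Delta F}G=\lk_\Delta(F\cup G)$, and verify the cardinality bound $\#(F\cup G)\le d-i-2$. The only cosmetic difference is that you use purity to pin down $\dim\lk_\Delta F$ exactly, whereas the paper gets by with the upper bound $\dim\lk_\Delta F\le d-\#F-1$; both suffice.
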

      \begin{proof}
      Let $\#F=j$, then $\dim \lk_{\Delta}F\leq d-j-1$. By the above characterization of
      $S_r$ simplicial complexes,
      it suffices to show that for all $i\leq r-2$ and  every $G \in \lk_{\Delta}F$, with $\#G
      \leq d-j-i-2$, the reduced homology module
      $\widetilde{H}_i(\lk_{\lk_{\Delta}F}G;k)$ is zero. This follows from the facts that
      ${\rm lk}_{{\rm lk}_\Delta F}(G) =
      {\rm lk}_\Delta (F\cup G)$, $\#(F\cup G) \le d-i-2$ and $\Delta$ is $S_r$.
      \end{proof}

      Recall that a relative simplicial complex is a pair of simplicial
      complexes $(\Delta, \Gamma)$ where $\Gamma$ is a subcomplex of
      $\Delta$. For a relative simplicial complex $(\Delta, \Gamma)$
      define $I_{\Delta,\Gamma}$ to be the ideal in $k[\Delta]$ generated
      by the monomials $x_{i_1}x_{i_2}\ldots x_{i_s}$ with $F=\{i_1,\ldots ,i_s\}\in \Delta\setminus \Gamma$. A
      relative simplicial complex is said to be $S_r$ if $I_{\Delta,
      \Gamma}$ is $S_r$ as a module over $R=k[x_1,\ldots,x_n]$.  Let
      $\Delta^*_i$ be the subcomplex of $\Delta$ generated by its
      $i$-dimensional facets.  Following  \cite[Appendix
      II]{Bj-Wa-We-09}, it turns out that $\Delta$ is sequentially
      $S_r$ if and only if the relative simplicial complex $(\Delta^*_i,
      \Delta^*_i\cap (\Delta^*_{i+1}\cup\ldots \cup \Delta^*_{{\rm
      dim}(\Delta)}) )$ is $S_r$ for all $i$.

      For a relative simplicial complex $(\Delta, \Gamma)$, let
      $\widetilde{H}_i(\Delta, \Gamma; k)$ denote the $i$th reduced
      relative homology group of the pair $(\Delta, \Gamma)$ over $k$
      (see \cite[Chapter III, \S 7]{St-95}). Reisner's
      criterion for Cohen-Macaulayness of a relative simplicial complex
      is similar to the one for a simplicial complex \cite[Chapter III, Theorem
      7.2]{St-95}. Likewise, in an exact analogy, Terai's formulation for
      a $S_r$ simplicial complex carries over for the relative case. In other
      words, a relative simplicial complex $(\Delta,\Gamma)$ is $S_r$ if
      and only if $\widetilde{H}_i({\rm lk}_\Delta F, {\rm lk}_\Gamma F; k)
      =0$ for all $-1\leq i\leq r-2$ and all $F\in \Delta$ (including
      $F=\emptyset$) with $\#F\leq d-i-2$, where $d-1={\rm
      dim}(\Delta)$.

      For a relative simplicial complex $(\Delta, \Gamma)$, as an $R$-module,
      $I_{\Delta,\Gamma}$ only depends on the difference $\Delta
      \setminus \Gamma$ (see the remarks following \cite[Chapter III, Theorem
      7.3]{St-95}). In particular, if $\Delta^{(i)}$ is the
      $i$-skeleton of $\Delta$ and $\Delta^{[i]}$ is the pure
      $i$-skeleton of $\Delta$, then
      $$\Delta^*_i  \setminus  (\Delta^*_i\cap
      \Delta^*_{i+1}\cup\ldots \cup \Delta^*_{{\rm dim}(\Delta)}) =
      \Delta^{[i]} \setminus (\Delta^{[i+1]})^{(i)}.$$
      Duval makes the
      above observation and proves that the relative simplicial complex
      $\Delta^{[i]} \setminus (\Delta^{[i+1]})^{(i)}$ is Cohen-Macaulay for
      all $i$ if and only if every pure $i$-skeleton $\Delta^{[i]}$ is
      Cohen-Macaulay \cite[Section 3]{Du-96}. We follow his proof
      step by step to show that the same result
      is true if we replace the Cohen-Macaulay property with $S_r$.
      To do this we need some preliminary results.\\

      It is known that if $\Delta$ is a Cohen-Macaulay simplicial
      complex, then so is $\Delta^{(i)}$, the
      $i$-skeleton of $\Delta$. We generalize this result for the
      property $S_r$.

      \begin{prop}\label{prop:Skeleton_S_r}
      If $\Delta$ satisfies Serre's condition $S_r$, then $\Delta^{(i)}$
      satisfies this condition $(2 \leq r \leq i+1)$.

      \end{prop}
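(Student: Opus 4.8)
The plan is to use Terai's homological characterization of $S_r$ simplicial complexes, which reduces the claim to a statement about reduced homology of links. Let $\dim \Delta = d-1$. I want to show $\Delta^{(i)}$ satisfies $S_r$ for $2 \le r \le i+1$. By the criterion stated in the excerpt, I must verify that for every $G \in \Delta^{(i)}$ (including $G = \emptyset$) and every $j$ with $-1 \le j \le r-2$ and $\#G \le \dim(\Delta^{(i)}) - j - 1$, the reduced homology $\widetilde{H}_j(\lk_{\Delta^{(i)}} G; k)$ vanishes.

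The key computational step is to relate $\lk_{\Delta^{(i)}} G$ to $\lk_{\Delta} G$. First I would observe that $\Delta^{(i)}$ is pure of dimension $i$ in the relevant sense for faces being considered, and more importantly that taking skeleton commutes appropriately with taking links: for $G \in \Delta$ with $\#G = s$, one has $\lk_{\Delta^{(i)}} G = (\lk_{\Delta} G)^{(i-s)}$, since a face $H$ lies in $\lk_{\Delta^{(i)}} G$ exactly when $G \cup H \in \Delta$ with $\dim(G\cup H) \le i$, i.e. $\#H \le i - s + 1$, which says $H \in (\lk_{\Delta} G)^{(i-s)}$. This reduces the whole problem to understanding the reduced homology of the $i$-skeleton of a simplicial complex in low degrees.

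Next I would invoke the standard fact that for any simplicial complex $\Gamma$, its $m$-skeleton $\Gamma^{(m)}$ satisfies $\widetilde{H}_j(\Gamma^{(m)}; k) \cong \widetilde{H}_j(\Gamma; k)$ for $j < m$ (the skeleton only alters homology in degrees $\ge m$, since it leaves the chain groups in degrees $\le m$ untouched and kills those above). Combining this with Lemma \ref{lem:linkage}, which guarantees that $\lk_{\Delta} G$ is again $S_r$, I can then apply Terai's criterion to $\lk_\Delta G$ itself: the needed vanishing $\widetilde{H}_j(\lk_\Delta G; k) = 0$ holds in precisely the range of degrees $j \le r-2$ and cardinalities dictated by the $S_r$ condition on $\Delta$. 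The hypothesis $r \le i+1$ is exactly what ensures that the degrees $j \le r-2$ fall strictly below the skeleton cut-off $m = i - s$ in all the relevant cases, so the homology of the skeleton agrees with the homology of the full link, where it vanishes by the $S_r$ assumption.

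The main obstacle I anticipate is bookkeeping the cardinality and dimension inequalities so that the two ranges line up: one must check that whenever $G$ is a face of $\Delta^{(i)}$ with $\#G = s$ and $j \le r-2$ lies in the range forced by $\dim \Delta^{(i)}$, the degree $j$ is genuinely below the skeleton threshold $i - s$ (guaranteeing $\widetilde H_j(\lk_{\Delta^{(i)}} G) = \widetilde H_j(\lk_\Delta G)$) and simultaneously that $\#(G)$ together with $j$ satisfies $\#G \le d - j - 2$ so that the $S_r$ property of $\Delta$ can be applied to conclude the vanishing on $\lk_\Delta G$. Tracking these inequalities carefully, and confirming the boundary cases $j = -1$ (purity) and $j = 0$ (connectedness) separately, is where the real care is required; the restriction $r \le i+1$ is what makes the arithmetic work out.
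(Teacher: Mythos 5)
Your proposal is correct, and the arithmetic does close up: Terai's criterion applied to the $i$-dimensional complex $\Delta^{(i)}$ only requires checking faces $G$ with $\#G \le i-j-1$, which is precisely the inequality $j < i-\#G$ needed for your skeleton-homology isomorphism, and $\#G \le i-j-1 \le d-j-2$ then puts you in the range where the $S_r$ hypothesis on $\Delta$ kills $\widetilde{H}_j(\lk_\Delta G;k)$. However, your route is genuinely different from the paper's. The paper proceeds by induction on $r$: the base case $r=2$ is handled by an explicit path-connectedness argument for $\lk_{\Delta^{(i)}}F$ (lifting a path in $\lk_\Delta F$ into the skeleton), and the inductive step isolates only the top homological degree $r-2$, arguing directly that every face of $\lk_\Delta F$ of dimension at most $r-1$ already lies in $\lk_{\Delta^{(i)}}F$, so the relevant chain groups coincide. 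Your argument replaces all of this with the single clean identity $\lk_{\Delta^{(i)}}G = (\lk_\Delta G)^{(i-\#G)}$ together with the standard fact that passing to the $m$-skeleton preserves reduced homology in degrees below $m$; this handles all degrees $-1 \le j \le r-2$ uniformly, needs no induction, and subsumes the purity and connectedness cases as the instances $j=-1$ and $j=0$ of the same isomorphism. What the paper's version buys is elementary self-containedness (it never invokes the skeleton-homology fact explicitly); what yours buys is brevity and a transparent reason why the restriction $r \le i+1$ appears, namely to keep $j \le r-2$ below the cut-off $i$ when $G=\emptyset$. One small quibble: you do not actually need Lemma \ref{lem:linkage} here, since the vanishing of $\widetilde{H}_j(\lk_\Delta G;k)$ is already part of Terai's criterion for $\Delta$ itself; citing the lemma is harmless but adds an unnecessary layer.
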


      \begin{proof} We check Terai's criterion for $S_r$ simplicial complexes.
      To prove the assertion for $\Delta^{(i)}$ we use
      induction on $r\geq 2$. Assume that $\Delta$ satisfies Serre's
      condition $S_2$. Then $\Delta$ is pure, hence $\Delta^{(i)}$ is pure. Furthermore,
      for $F\in \Delta$ with $\#F \leq d-2$  ${\rm lk}_{\Delta}F$ is connected.
      Let $F\in \Delta^{(i)}$ and $\#F\leq i- 1$.
      It is enough to show that ${\rm lk}_{\Delta^{(i)}}F$ is connected, or
      equivalently, path connected. Let $\{u\}, \{v\} \in {\rm lk}_{\Delta^{(i)}}F$.
      Then $\{u\}, \{v\} \in {\rm lk}_{\Delta}F$ which is connected. Hence,
      there exists a sequence of vertices of $\Delta$, $u_0=u, u_1, \ldots, u_t=v$, such that $\{ u_j,
      u_{j+1}\} \in {\rm lk}_{\Delta}F$, $j=0,\ldots, t-1$.  Thus, $\{
      u_j, u_{j+1}\}\cap F=\emptyset$ and $\{ u_j, u_{j+1}\}\cup F \in
      \Delta$. Since $\#(\{ u_j, u_{j+1}\}\cup F)\leq i+1$, $\{ u_j,
      u_{j+1}\}\cup F \in \Delta^{(i)}$ and hence $\{ u_j, u_{j+1}\}\in {\rm
      lk}_{\Delta^{(i)}}F$.

      Now assume that $\Delta$ satisfies Serre's condition $S_r$ for
      $r>2$. Then $\Delta$ satisfies Serre's condition $S_j$ for $j=1,
      \ldots, r$. Thus by induction hypothesis $\Delta^{(i)}$ satisfies
      Serre's condition $S_j$ for $j=1, \ldots, r-1$. Therefore,  for
      $q\leq r-3$ and $F\in \Delta^{(i)}$ with $\#F\leq i-q-1$,
      $\widetilde{H}_q({\rm lk}_{\Delta^{(i)}}F; k) =0$. Thus it remains to
      show that for $\#F\leq i-r+1$, $\widetilde{H}_{r-2}({\rm
      lk}_{\Delta^{(i)}}F; k)  =0$. To prove this, since $ {\rm
      lk}_{\Delta^{(i)}}F \subset  {\rm lk}_{\Delta}F$, it is enough to show
      that for $q\le r-1$, any $q$-dimensional face $H$ of ${\rm lk}_{\Delta}F$ lies in
      ${\rm lk}_{\Delta^{(i)}}F$. But $\#(H\cup F) \le i+1$, and
      hence, $H\cup F \in \Delta^{(i)}$, and consequently, $H\in {\rm
      lk}_{\Delta^{(i)}}F$.

      \end{proof}

      Now we adopt Duval's results for the case of sequentially $S_r$
      simplicial complexes.

      \begin{lem}\label{lem:relative equality} (see \cite[Lemma 3.1]{Du-96}).
      Let $F$ be a
      face of a $(d-1)$-dimensional simplicial complex $\Delta$ and let
      $\Gamma$ be either the empty simplicial complex or a $S_r$
      simplicial complex of the same dimension as $\Delta$. Then
      $$\widetilde{H}_i({\rm lk}_\Delta F; k) = \widetilde{H}_i({\rm
      lk}_\Delta F; {\rm lk}_\Gamma F; k)$$ for all $i\leq r-2$ and all $F\in
      \Delta$ with $\#F \leq d-i-2$.
      \end{lem}

      \begin{proof}
      The proof is the same as the proof of the similar lemma of Duval
      \cite[Lemma 3.1]{Du-96}. If ${\rm lk}_\Gamma F$ is an empty set,
      then the equality is obvious. Otherwise one only needs to change the range of the
      index $i$ with the one given above, impose the condition on $\#F$
      and replace Cohen-Macaulay property with $S_r$.
      \end{proof}

      \begin{cor} (see \cite[Corollary 3.2]{Du-96}). Let
      $\Delta$ be a simplicial complex, and let $\Gamma$ be either the
      empty simplicial complex or a $S_r$ simplicial complex of the same
      dimension as $\Delta$. Then $\Delta$ is $S_r$ if and only if
      $(\Delta, \Gamma)$ is relative $S_r$.
      \end{cor}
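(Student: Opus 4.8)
The plan is to reduce the claimed equivalence to the two homological criteria recorded above and then to invoke Lemma \ref{lem:relative equality} to match the corresponding families of reduced and reduced relative homology groups. Write $d-1=\dim \Delta$. By Terai's formulation, $\Delta$ is $S_r$ if and only if $\widetilde{H}_i({\rm lk}_\Delta F;k)=0$ for all $-1\leq i\leq r-2$ and all $F\in \Delta$ with $\#F\leq d-i-2$. Likewise, since $\Gamma$ is either empty or $S_r$ of the same dimension $d-1$ as $\Delta$, the relative version of Terai's criterion says that $(\Delta,\Gamma)$ is relative $S_r$ if and only if $\widetilde{H}_i({\rm lk}_\Delta F,{\rm lk}_\Gamma F;k)=0$ for the same range of $i$ and the same faces $F$.

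First I would dispose of the case where $\Gamma$ is the empty complex: then each ${\rm lk}_\Gamma F$ is empty, the relative homology collapses to ordinary reduced homology, and the two criteria literally coincide, so the equivalence is immediate. For the main case, where $\Gamma$ is $S_r$ of dimension $d-1$, the hypotheses of Lemma \ref{lem:relative equality} are met for precisely the indices $i\leq r-2$ and the faces $F$ with $\#F\leq d-i-2$ appearing in both criteria above. That lemma then supplies the equality $\widetilde{H}_i({\rm lk}_\Delta F;k)=\widetilde{H}_i({\rm lk}_\Delta F,{\rm lk}_\Gamma F;k)$ across exactly this range of $(i,F)$.

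Combining these observations, for each admissible pair $(i,F)$ the absolute vanishing condition holds if and only if the relative one does. Running over all such $(i,F)$, I conclude that $\Delta$ is $S_r$ if and only if $(\Delta,\Gamma)$ is relative $S_r$, which is the assertion. This is the step-by-step transcription of Duval's argument \cite[Corollary 3.2]{Du-96}, with Cohen-Macaulayness replaced throughout by $S_r$.

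I do not anticipate a genuine obstacle, since the homological content has already been isolated in Lemma \ref{lem:relative equality}. The only point requiring care is the bookkeeping: one must verify that the index range $-1\leq i\leq r-2$ and the constraint $\#F\leq d-i-2$ in Terai's criterion, in both its absolute and relative forms, coincide exactly with the range in which the lemma guarantees the identification of the homology groups. Because $\Gamma$ is assumed to have the same dimension as $\Delta$ (or to be empty), these ranges align, and the equivalence follows without any further argument.
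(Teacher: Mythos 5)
Your proposal is correct and follows essentially the same route as the paper, which simply cites Duval's argument together with Lemma \ref{lem:relative equality}; you have merely written out the bookkeeping (matching the index ranges $-1\le i\le r-2$, $\#F\le d-i-2$ in the absolute and relative forms of Terai's criterion) that the paper leaves implicit. No gaps.
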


      \begin{proof}
      Similar to the corresponding corollary by Duval \cite[Corollary
      3.2]{Du-96}, it follows from Lemma \ref{lem:relative equality}.
      \end{proof}

      \begin{thm} \label{thm:pure skeletons} (see \cite[Theorem 3.3]{Du-96}).
      Let
      $\Delta$ be a $(d-1)$-dimensional simplicial complex. Then $\Delta$
      is sequentially $S_r$ if and only if its pure $i$-skeleton
      $\Delta^{[i]}$ is $S_r$ for all $-1\leq i \leq d-1$.
      \end{thm}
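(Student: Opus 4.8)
The plan is to reduce the statement to the family of relative $S_r$ conditions already recorded before the theorem and then run a descending induction on the skeleton dimension. Recall that, via the criterion of \cite{Bj-Wa-We-09} together with the fact that $I_{\Delta,\Gamma}$ depends only on the difference $\Delta\setminus\Gamma$, the complex $\Delta$ is sequentially $S_r$ if and only if the relative simplicial complex $(\Delta^{[i]},(\Delta^{[i+1]})^{(i)})$ is $S_r$ for every $i$. So it suffices to prove that this family of relative conditions holds if and only if each pure skeleton $\Delta^{[i]}$ is $S_r$. The engine for passing between the relative and absolute conditions is the Corollary following Lemma~\ref{lem:relative equality}: for a subcomplex $\Gamma$ that is either empty or $S_r$ of the same dimension as $\Delta'$, the complex $\Delta'$ is $S_r$ precisely when $(\Delta',\Gamma)$ is relative $S_r$.

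For the ``if'' direction I would fix $i$ and observe that $(\Delta^{[i+1]})^{(i)}$ is a subcomplex of $\Delta^{[i]}$ which, when nonempty, is pure of the same dimension $i$. Assuming every $\Delta^{[j]}$ is $S_r$, Proposition~\ref{prop:Skeleton_S_r} applied to the $S_r$ complex $\Delta^{[i+1]}$ shows that its $i$-skeleton $(\Delta^{[i+1]})^{(i)}$ is again $S_r$. Feeding $\Delta'=\Delta^{[i]}$ and $\Gamma=(\Delta^{[i+1]})^{(i)}$ into the Corollary (which also allows the empty case) yields that $(\Delta^{[i]},(\Delta^{[i+1]})^{(i)})$ is relative $S_r$ for every $i$, hence $\Delta$ is sequentially $S_r$.

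For the ``only if'' direction I would induct downward on $i$, starting at $i=d-1$. At the top, $\Delta^{[d]}$, and therefore $(\Delta^{[d]})^{(d-1)}$, is empty, so the relative $S_r$ condition with empty $\Gamma$ immediately gives that $\Delta^{[d-1]}$ is $S_r$. For the inductive step, suppose $\Delta^{[i+1]}$ is $S_r$; then Proposition~\ref{prop:Skeleton_S_r} makes $(\Delta^{[i+1]})^{(i)}$ an $S_r$ subcomplex of $\Delta^{[i]}$ of the same dimension, and applying the Corollary in the reverse direction to the already-known relative $S_r$ condition on $(\Delta^{[i]},(\Delta^{[i+1]})^{(i)})$ forces $\Delta^{[i]}$ to be $S_r$, completing the induction.

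The main obstacle I anticipate is bookkeeping the hypotheses of Proposition~\ref{prop:Skeleton_S_r} and of the Corollary at the boundary values of $i$. The proposition is stated under the constraint $2\le r\le i+1$, so when $r>i+1$ I cannot invoke it directly; in that range, however, $\Delta^{[i+1]}$ has dimension $i+1<r$, so being $S_r$ forces it to be Cohen-Macaulay, its $i$-skeleton is then Cohen-Macaulay by the classical skeleton theorem, and a Cohen-Macaulay complex of dimension $i$ is automatically $S_r$, which recovers the $S_r$-ness of $\Gamma=(\Delta^{[i+1]})^{(i)}$ needed to apply the Corollary. I would also verify that $\Gamma$ and $\Delta^{[i]}$ share the same dimension whenever $\Gamma$ is nonempty, and dispatch the trivial small-$i$ cases ($i=-1,0$) separately; these are exactly the points where Duval's original argument must be matched to the range $-1\le i\le d-1$ in the statement.
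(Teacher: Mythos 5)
Your proposal is correct and follows essentially the same route as the paper: the paper's proof simply defers to Duval's argument for \cite[Theorem 3.3]{Du-96}, with Proposition~\ref{prop:Skeleton_S_r} supplying the one new ingredient (skeletons of $S_r$ complexes are $S_r$), and your write-up is a faithful expansion of exactly that argument via the relative complexes $(\Delta^{[i]},(\Delta^{[i+1]})^{(i)})$ and the Corollary to Lemma~\ref{lem:relative equality}. Your explicit treatment of the boundary case $r>i+1$ (where Proposition~\ref{prop:Skeleton_S_r} does not directly apply and one falls back on Cohen--Macaulayness) is a detail the paper leaves implicit, and it is handled correctly.
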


      \begin{proof}
      The proof is the same as the one given by Duval \cite[Theorem
      3.3]{Du-96}. The only item needed is that each $i$-skeleton of a $S_r$
      simplicial complex is again $S_r$ for all $i$. But this is proved in
      Proposition \ref{prop:Skeleton_S_r}.
      \end{proof}

      The following is an immediate bi-product of this theorem.

      \begin{cor} A simplicial complex $\Delta$ is $S_r$ if and only
      if it is sequentially $S_r$ and pure.
      \end{cor}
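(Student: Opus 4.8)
The plan is to dispatch both implications directly from the definition of sequentially $S_r$ together with Theorem \ref{thm:pure skeletons}, so that no new homological computation is required.

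For the forward implication, assume $\Delta$ is $S_r$. First I would record purity: since $r\geq 2$, the complex is in particular $S_2$, and the $i=-1$ instance of Terai's criterion (the vanishing of $\widetilde{H}_{-1}$ of the relevant links) is exactly the statement that $\Delta$ is pure. To see that $\Delta$ is sequentially $S_r$, I would exhibit the trivial filtration $0=M_0\subset M_1=k[\Delta]$. Condition (a) holds because the single quotient $M_1/M_0=k[\Delta]$ satisfies $S_r$ by hypothesis, and condition (b) is vacuously true, since there is only one successive quotient and hence no dimension inequality to verify.

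For the converse, suppose $\Delta$ is sequentially $S_r$ and pure, say of dimension $d-1$. The key observation is that for a pure complex the top pure skeleton $\Delta^{[d-1]}$ is generated by the facets of $\Delta$, all of which have dimension $d-1$, so $\Delta^{[d-1]}=\Delta$. Applying Theorem \ref{thm:pure skeletons}, the hypothesis that $\Delta$ is sequentially $S_r$ gives that every pure skeleton $\Delta^{[i]}$ is $S_r$; taking $i=d-1$ yields that $\Delta=\Delta^{[d-1]}$ is $S_r$, as desired.

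I do not expect any serious obstacle here: both directions are essentially bookkeeping built on the machinery already established. The only points that require a moment's care are the vacuous verification of condition (b) for the one-step filtration and the identification $\Delta^{[d-1]}=\Delta$ in the pure case. Once these are noted, the corollary follows immediately from Theorem \ref{thm:pure skeletons}.
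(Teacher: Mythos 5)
Your proof is correct and follows essentially the same route as the paper: the forward direction is the observation that $S_r$ (for $r\ge 2$) forces purity together with the trivial one-step filtration, and the converse identifies $\Delta=\Delta^{[d-1]}$ for a pure complex and invokes Theorem \ref{thm:pure skeletons}. The paper's proof is just a terser version of exactly this argument.
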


      \begin{proof}
      Since $S_r$ condition implies purity, one implication is
      clear. Assume that $\Delta$ is pure and ${\rm dim}\Delta =d-1$.
      Then $\Delta^{[d-1]}=\Delta$
      and the assertion follows by Theorem \ref{thm:pure skeletons}.
      \end{proof}

      \begin{rem}
      Some authors define a simplicial complex to be sequentially Cohen-
      Macaulay if its pure $i$-skeleton is Cohen-Macaulay for all $i$.
      Likewise, we might take a similar statement as the definition of
      sequentially $S_r$ simplicial complexes. But we preferred to begin
      with the algebraic definition given in this section and prove that
      both definitions are equivalent.
      \end{rem}

      We end this section with the following characterization of
      sequentially $S_2$ simplicial complexes which will be used in
      the last section.

      \begin{thm}\label{thm:localcriteria}
Let $\Delta$ be a simplicial complex with vertex set $\V$. Then
$\Delta$ is sequentially $S_2$ if and only if the following
conditions hold:

\begin{itemize}

\item[(i)] $\Delta^{[i]}$ is connected for all $i\ge 1$

\item[(ii)] $\lk_\Delta(x)$ is sequentially $S_2$ for all $x\in \V$

\end{itemize}

\end{thm}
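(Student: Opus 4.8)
The plan is to derive everything from Theorem \ref{thm:pure skeletons} combined with Terai's criterion for the $S_2$ condition. By Theorem \ref{thm:pure skeletons}, $\Delta$ is sequentially $S_2$ if and only if each pure skeleton $\Delta^{[i]}$ is $S_2$. Since $\Delta^{[i]}$ is pure of dimension $i$ by construction, the $j=-1$ instance of Terai's criterion (purity) holds automatically, so the only surviving requirement is the $j=0$ instance: $\Delta^{[i]}$ is $S_2$ exactly when $\lk_{\Delta^{[i]}}F$ is connected for every $F\in\Delta^{[i]}$ with $\#F\le i-1$ (vanishing of $\widetilde{H}_0$ is connectedness). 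Hence the theorem reduces to showing that the totality of these connectedness conditions, over all $i$, is equivalent to (i) together with (ii).

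The technical core is the identity
\[ \lk_{\Delta^{[i]}}(x)=(\lk_\Delta x)^{[i-1]}, \]
valid for every vertex $x$ and every $i$. To prove it I would note that, for $x\notin H$, one has $H\in\lk_{\Delta^{[i]}}(x)$ iff $H\cup\{x\}$ is contained in some $i$-dimensional face $\sigma$ of $\Delta$; any such $\sigma$ contains $x$, so $\sigma\setminus\{x\}$ is an $(i-1)$-face of $\lk_\Delta x$ containing $H$, which is exactly the statement $H\in(\lk_\Delta x)^{[i-1]}$, and the converse is the same computation read backwards. This is the step most prone to error through careless handling of degenerate cases, so I would check separately that when $x$ lies in no $i$-face both sides are void.

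Granting the identity, I would split the connectedness conditions according to whether $F$ is empty. The case $F=\emptyset$ forces $i\ge 1$ and asserts that $\lk_{\Delta^{[i]}}(\emptyset)=\Delta^{[i]}$ is connected for all $i\ge 1$, which is precisely (i). For $F\ne\emptyset$ I would fix a vertex $x\in F$ and write $F=\{x\}\cup F'$; then the identity gives $\lk_{\Delta^{[i]}}F=\lk_{(\lk_\Delta x)^{[i-1]}}F'$ with $\#F'\le (i-1)-1$. As $i$ ranges over $i\ge 1$ and $x$ over $\V$, the resulting conditions are exactly the link-connectivity conditions (including the subcase $F'=\emptyset$) defining the $S_2$ property of every pure skeleton $(\lk_\Delta x)^{[i-1]}$; by Theorem \ref{thm:pure skeletons} this says precisely that $\lk_\Delta(x)$ is sequentially $S_2$ for every $x$, i.e.\ (ii).

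Putting the two directions together then completes the argument: if $\Delta$ is sequentially $S_2$, every $\Delta^{[i]}$ is $S_2$ and both (i) and (ii) follow from the reformulation; conversely, given (i) and (ii), each connectedness condition for each $\Delta^{[i]}$ is supplied either by (i) (empty $F$) or, through the identity, by the sequential $S_2$-ness of the relevant vertex link provided by (ii) (nonempty $F$), so every $\Delta^{[i]}$ is $S_2$ and $\Delta$ is sequentially $S_2$. The main obstacle is establishing and correctly applying the identity $\lk_{\Delta^{[i]}}(x)=(\lk_\Delta x)^{[i-1]}$, the bridge that converts link-connectivity of the skeleta of $\Delta$ into sequential $S_2$-ness of the vertex links.
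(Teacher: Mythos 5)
Your proof is correct and follows essentially the same route as the paper: both reduce to $S_2$-ness of all pure skeletons via Theorem \ref{thm:pure skeletons}, use Terai's criterion to turn this into link-connectivity conditions, and rely on the identity $\lk_{\Delta^{[i]}}(x)=(\lk_\Delta x)^{[i-1]}$ together with $\lk_{\Delta^{[i]}}F=\lk_{\lk_{\Delta^{[i]}}(x)}(F\setminus\{x\})$ to convert the nonempty-$F$ conditions into sequential $S_2$-ness of the vertex links. The only cosmetic difference is that the paper's forward direction quotes Lemma \ref{lem:linkage} where you instead match the connectedness conditions bijectively in both directions at once.
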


\begin{proof}

Let $\Delta$ be a sequentially $S_2$. Then $\Delta^{[i]}$ is $S_2$
for all $-1\le i\le d-1$. Thus $\Delta^{[i]}$ is connected for all
$i\ge 1$. On the other hand $\lk_{\Delta^{[i]}}(x)$ is $S_2$ for all
$-1\le i\le d-1$ and so
$\lk_{\Delta^{[i+1]}}(x)=(\lk_\Delta(x))^{[i]}$ is $S_2$ for all
$-1\le i\le d-2$. Therefore $\lk_\Delta(x)$ is sequentially $S_2$.

Now let $\Delta$ satisfies the conditions (i) and (ii). Since
$\lk_\Delta(x)$ is sequentially $S_2$ for all $x\in\V$ we have that
$(\lk_\Delta(x))^{[i]}=\lk_{\Delta^{[i+1]}}(x)$ is $S_2$ for all
$-1\le i\le d-2$ and so $\lk_{\Delta^{[i]}}(x)$ is $S_2$ for all
$-1\le i\le d-1$. Now the connectedness of $\Delta^{[i]}$ for $i\ge
1$ implies that $\Delta^{[i]}$ is $S_2$ for all $-1\le i\le d-1$.
Indeed, for $F\ne \emptyset$ and $x\in F$, $\lk_{\Delta^{[i]}}F =
\lk_{\lk_{\Delta^{[i]}}(x)}G$ for $G=F\setminus \{x\}$. Therefore
$\Delta$ is sequentially $S_2$.

\end{proof}

      \section{Alexander dual of sequentially $S_r$ simplicial complexes}

      In this section we show that a simplicial complex is sequentially $S_r$ if and
      only if the minimal free resolution of the Stanley-Reisner ideal of
      its Alexander dual is componentwise linear in the first $r$ steps.
      This result resembles a result of Herzog and Hibi  \cite[Proposition 1.5]{He-Hi-99}
      on sequentially Cohen-Macaulay simplicial complexes. And, our proof
      would be a modification of the sequentially Cohen-Macaulay case together with
      an application of Theorem \ref{thm:pure skeletons}.

      We first adopt the following definitions from \cite[Definition 3.6]{Ya-2000}
      and \cite[\S 1]{He-Hi-99}.

      Consider $R=k[x_1,\ldots, x_n]$ with ${\rm deg}(x_i)=1$ for all $i$. If
      $I$
      is a homogenous ideal of $R$ and $r\ge 1$, then $I$ is said to
      be linear in the first $r$ steps, if for some integer $d$, $\beta_{i,i+t}(I)=0$
      for all $0\le i < r$ and $t\neq d$. We write $I_{<j>}$ for the ideal
      generated by all homogenous polynomials of degree $j$ belonging to
      $I$. We say that a homogenous ideal $I\subset R$ is componentwise
      linear if $I_{<j>}$ has a linear resolution for all $j$. The ideal
      $I$ is said to be {\it componentwise linear in the first $r$ steps}
      if for all $j\ge 0$, $I_{<j>}$ is linear in the first $r$
      steps. A simplicial complex $\Delta$ on $[n]$ is said to be
      linear in the first $r$ steps, componentwise linear and
      componentwise linear in the first $r$ steps, if $I_\Delta$
      satisfies either of
      these properties, respectively.

      Now let $I\subset R$ be an ideal generated by squarefree monomials.
      Then for each degree $j$ we write $I_{[j]}$ for the ideal generated
      by the squarefree monomials of degree $j$ belonging to $I$. We say
      that $I$ is squarefree componentwise linear if $I_{[j]}$ has linear
      resolution for all $j$. The ideal $I$ is said to be {\it squarefree
      componentwise linear in the first $r$ steps} if $I_{[j]}$ has a
      resolution which is linear in the first $r$ steps for all $j$.

      Below we adopt a result of Herzog and Hibi \cite[Proposition
      1.5]{He-Hi-99} for the case of componentwise linearity in the first $r$
      steps.

      \begin{prop}\label{prop:Square_Component}
      Let $I$ be a squarefree monomial ideal in $R$. Then $I$ is
      componentwise linear in the first $r$ steps if and only if $I$ is
      squarefree componentwise linear in the first $r$ steps.
      \end{prop}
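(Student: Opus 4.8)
The plan is to transcribe Herzog and Hibi's proof of their Proposition~1.5, keeping track only of the homological degrees $i<r$. By definition, $I$ is componentwise linear in the first $r$ steps precisely when $I_{<j>}$ is linear in the first $r$ steps for every $j$, and $I$ is squarefree componentwise linear in the first $r$ steps precisely when $I_{[j]}$ is linear in the first $r$ steps for every $j$. Since $I_{<j>}$ and $I_{[j]}$ are each generated in the single degree $j$, each of these conditions is exactly the vanishing $\beta_{i,i+t}=0$ for all $0\le i<r$ and all $t\ne j$. Hence it suffices to compare the first $r$ rows of the Betti tables of $I_{<j>}$ and $I_{[j]}$, and I would argue by induction on $j$.

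The combinatorial input is the identity $I_{<j>}=I_{[j]}+\fm I_{<j-1>}$, which holds because $I$ is squarefree: a monomial of degree $j$ in $I$ is either squarefree, hence a generator of $I_{[j]}$, or divisible by $x_k^2$ for some $k$, in which case dividing by one $x_k$ leaves a degree $j-1$ monomial still in $I$ (the squarefree generator that divides it uses at most one copy of $x_k$), so the monomial lies in $\fm I_{<j-1>}$. This gives the short exact sequence
$$0 \lo I_{[j]}\cap \fm I_{<j-1>}\lo I_{[j]}\oplus \fm I_{<j-1>}\lo I_{<j>}\lo 0,$$
and one checks, exactly as in Herzog and Hibi, that the intersection term equals $\fm I_{[j-1]}$. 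Applying $\Tor(-,k)$ and reading off the resulting long exact sequence in homological degrees below $r$ then transfers the vanishing of the off-diagonal Betti numbers between $I_{<j>}$ and $I_{[j]}$, provided the two flanking ideals $\fm I_{<j-1>}$ and $\fm I_{[j-1]}$ are already understood through the inductive hypothesis on $j$.

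Two auxiliary facts must be supplied in their truncated form. First, I need that if an ideal generated in one degree is linear in the first $r$ steps, then so is $\fm$ times it; this is the $r$-step analogue of the Herzog--Hibi observation that multiplication by $\fm$ preserves a linear resolution, and it controls the terms $\fm I_{<j-1>}$ and $\fm I_{[j-1]}$. Second, I must match the degree-$j$ linear strands of the three ideals appearing above. The main obstacle is exactly this passage to the first $r$ steps: the original argument may freely invoke statements that quantify over the whole resolution, whereas here every such step must be re-examined using only the Betti numbers $\beta_{i,i+t}$ with $i<r$. Concretely, in the long exact $\Tor$ sequence the connecting homomorphisms and the contribution of the intersection term must be controlled in homological degrees $i<r$ alone, and verifying that no data from steps $i\ge r$ leak into the induction---so that it closes using only the first $r$ rows of the Betti tables---is the crux of the proof.
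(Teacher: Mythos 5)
Your proposal follows exactly the route the paper takes: it defers to Herzog and Hibi's proof of their Proposition 1.5, restricting all homological indices to $i<r$, and isolates the same key auxiliary fact the paper's proof singles out, namely that if an ideal generated in a single degree is linear in the first $r$ steps then so is $\fm$ times it. Your write-up is in fact more explicit than the paper's (which simply asserts the adaptation works), and the truncation concern you flag is handled correctly by the sandwiching in the long exact $\Tor$ sequence, so the argument is sound and essentially identical to the paper's.
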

      \begin{proof}
      The proof is the same as \cite[Proposition 1.5]{He-Hi-99} with just
      a restriction on the index $i$ used in the proof of Herzog and
      Hibi. Here we need to assume  that $i < r$. Also one needs to
      observe that when $I$ has a linear resolution in the first $r$ steps,
      the ideal $\fm I$  has a linear resolution in the first $r$ steps too.
      Here $\fm =(x_1,\ldots,x_n)$ is the irrelevant maximal ideal.

      \end{proof}

      We may now generalize a result of Herzog and Hibi \cite[Theorem
      2.1]{He-Hi-99}. As we already mentioned, Yanagawa and Terai proved
      that $\Delta$ is $S_r$ if and only if $I_\Delta^\vee$ is
      linear in the first $r$ steps.

      \begin{thm}
      The Stanley-Reisner ideal of $\Delta$ on $[n]$ is componentwise linear in the
      first $r$ steps if and only if $\Delta^\vee$, the Alexander dual of
      $\Delta$, is sequentially $S_r$.
      \end{thm}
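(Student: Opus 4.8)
The plan is to combine the two characterizations already developed in this excerpt with a componentwise decomposition of the Stanley-Reisner ideal. The central bridge is the theorem of Yanagawa and Terai: a simplicial complex $\Gamma$ satisfies $S_r$ if and only if $I_\Gamma^\vee$ is linear in the first $r$ steps. Combining this with Theorem \ref{thm:pure skeletons}, which says $\Delta^\vee$ is sequentially $S_r$ exactly when each pure $i$-skeleton $(\Delta^\vee)^{[i]}$ is $S_r$, I would reduce the whole statement to relating the componentwise pieces $(I_\Delta)_{[j]}$ of the Stanley-Reisner ideal to the ideals $I_{((\Delta^\vee)^{[i]})^\vee}$ attached to the pure skeletons of the dual.

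First I would set up the Alexander-duality dictionary carefully. The key combinatorial fact is that taking pure $i$-skeletons of $\Delta^\vee$ corresponds, under Alexander duality, to extracting the degree-$j$ squarefree component of $I_\Delta$ for a complementary degree $j=n-i-1$. Concretely, the facets of $\Delta^\vee$ of a fixed dimension dualize to the minimal non-faces of $\Delta$ of a fixed cardinality, which are exactly the squarefree generators of $(I_\Delta)_{[j]}$. So I would prove a lemma identifying $I_{((\Delta^\vee)^{[i]})^\vee}$ with $(I_\Delta)_{[j]}$ (up to the standard reindexing), so that the $S_r$ condition on the $i$-th pure skeleton of $\Delta^\vee$ translates, via Yanagawa--Terai, into the statement that $(I_\Delta)_{[j]}$ has a resolution linear in the first $r$ steps.

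With that dictionary in place, the two directions follow formally. If $\Delta^\vee$ is sequentially $S_r$, then every pure skeleton $(\Delta^\vee)^{[i]}$ is $S_r$ by Theorem \ref{thm:pure skeletons}, hence each dual ideal is linear in the first $r$ steps by Yanagawa--Terai, hence each squarefree component $(I_\Delta)_{[j]}$ is linear in the first $r$ steps; this says precisely that $I_\Delta$ is squarefree componentwise linear in the first $r$ steps, and Proposition \ref{prop:Square_Component} upgrades this to componentwise linear in the first $r$ steps. The converse reverses each implication, using the ``only if'' part of Proposition \ref{prop:Square_Component} to pass from componentwise to squarefree componentwise linearity.

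I expect the main obstacle to be the bookkeeping in the duality lemma: one must check that the pure $i$-skeleton operation on $\Delta^\vee$ interacts with Alexander duality in exactly the way that isolates a single squarefree-degree component of $I_\Delta$, and that the degree shift inherent in Alexander duality ($F \mapsto [n]\setminus F$) lines up the ``first $r$ steps'' of the two resolutions without an off-by-one error. The $S_r$-to-linearity correspondence and the componentwise machinery are already available as black boxes, so once the indexing is pinned down the argument is essentially a translation; I would therefore devote most of the care to stating the degree correspondence $j \leftrightarrow n-i-1$ precisely and verifying it on minimal generators.
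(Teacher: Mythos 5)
Your proposal is correct and follows essentially the same route as the paper: reduce componentwise linearity in the first $r$ steps to squarefree componentwise linearity via Proposition \ref{prop:Square_Component}, invoke the Yanagawa--Terai correspondence (\cite[Corollary 3.7]{Ya-2000}) identifying linearity of $I_{[j]}$ in the first $r$ steps with the $S_r$ property of $(\Delta^\vee)^{[n-j-1]}$, and conclude with Theorem \ref{thm:pure skeletons}. The only cosmetic difference is that you propose to re-derive the skeleton/component duality dictionary by hand, whereas the paper simply cites it as part of \cite[Corollary 3.7]{Ya-2000}.
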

      \begin{proof}
      The proof is an adaptation of the proof of part (a) of \cite[Theorem
      2.1]{He-Hi-99} with the following additional remarks: Let $I =
      I_\Delta$. Then by Proposition \ref{prop:Square_Component}, $I$ is
      squarefree componentwise linear in the first $r$ steps if and
      only if $I$ is componentwise linear in the first $r$ steps. By
      \cite[Corollary 3.7]{Ya-2000} for every $j$, $I_{[j]}$ is linear in the first
      $r$ steps if and only if $(\Delta^\vee)^{[n-j-1]}$ is $S_r$.
      Therefore, $I$ is componentwise linear in the first $r$ steps
      if and only if $(\Delta^\vee)^{[q]}$ is $S_r$ for every $q$.
      But by Theorem \ref{thm:pure skeletons}, this is equivalent to the
      sequentially $S_r$ property for $\Delta^\vee$.
      \end{proof}
      Van Tuyl and Villarreal \cite[Theorem 3.8 (a)]{V-V-08} state the
      dual version of \cite[Theorem 2.1]{He-Hi-99} for sequentially
      Cohen-Macaulay simplicial complexes. Dualizing the statement of the
      above
      theorem we get a similar generalization for sequentially $S_r$
      simplicial complexes.

      \begin{cor}\label{Cor:Square_Component}
      A simplicial complex $\Delta$ is sequentially $S_r$  if and only if
      the Stanley-Reisner ideal of the Alexander dual of $\Delta$ is
      componentwise linear in the first $r$ steps.
      \end{cor}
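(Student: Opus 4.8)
The plan is to deduce this corollary directly from the theorem immediately preceding it by invoking Alexander duality. The theorem states that for a simplicial complex $\Gamma$ on $[n]$, the Stanley-Reisner ideal $I_\Gamma$ is componentwise linear in the first $r$ steps if and only if the Alexander dual $\Gamma^\vee$ is sequentially $S_r$. The corollary asserts the dual packaging of this: $\Delta$ is sequentially $S_r$ if and only if $I_{\Delta^\vee}$ is componentwise linear in the first $r$ steps. So the whole content is a bookkeeping substitution combined with the involutive nature of Alexander duality.

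Concretely, I would apply the theorem with $\Gamma = \Delta^\vee$. Then the theorem reads: $I_{\Delta^\vee}$ is componentwise linear in the first $r$ steps if and only if $(\Delta^\vee)^\vee$ is sequentially $S_r$. The key fact I would invoke is that Alexander duality is an involution on simplicial complexes, that is, $(\Delta^\vee)^\vee = \Delta$. Substituting this identity into the right-hand side collapses the biconditional to exactly the statement of the corollary: $I_{\Delta^\vee}$ is componentwise linear in the first $r$ steps if and only if $\Delta$ is sequentially $S_r$. Reversing the two sides then gives the claimed form verbatim.

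The only point that requires a word of care is the involution $(\Delta^\vee)^\vee = \Delta$, but this is a standard and well-known property of the Alexander dual of a simplicial complex and needs no separate argument here; it is precisely the mechanism by which Van Tuyl and Villarreal obtain their dual version of the Herzog-Hibi theorem, as the surrounding text notes. Consequently there is no genuine obstacle in this proof: unlike the preceding theorem, whose proof genuinely combined Proposition \ref{prop:Square_Component}, the Yanagawa-Terai criterion, and Theorem \ref{thm:pure skeletons}, the corollary is a purely formal restatement. I would therefore write the proof as a single short paragraph that cites the preceding theorem and applies the duality involution, with no calculation involved.
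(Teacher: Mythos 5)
Your proposal is correct and matches the paper's intent exactly: the paper gives no separate proof, stating only that the corollary follows by ``dualizing the statement of the above theorem,'' which is precisely your substitution $\Gamma=\Delta^\vee$ together with the involution $(\Delta^\vee)^\vee=\Delta$. Nothing further is needed.
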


      \section{Some characterizations of sequentially $S_r$ cycles and
      sequentially $S_2$ bipartite graphs}

      In this section, we provide some applications of the results of the previous
      sections. We first classify sequentially $S_r$ cycles and show that a cycle $C_n$ is
      sequentially $S_2$ if and only if $n$ is odd and no cycles are
      sequentially $S_3$ except those which are Cohen-Macaulay, i.e.,
      $C_3$ and $C_5$. This generalizes a result of Francisco and Villarreal \cite[Theorem 4.1]{Fr-Va-07}.
      Then we generalize some results of Van Tuyl and Villarreal \cite{V-V-08}
      for sequentially $S_r$ graphs.
      We also extend a result of Francisco and H\`{a} \cite[Theorem
      4.1]{Fr-Ha-08} on graphs with whiskers. Then we generalize a result of Van Tuyl
      \cite[Theorem  2.10]{V-09} who proves that a bipartite graph is
      vertex decomposable if and only if it is sequentially Cohen-Macaulay. We prove
      that a bipartite graph is vertex decomposable if and only if it is
      sequentially $S_2$. This result also generalizes the authors'
      result which states that a bipartite graph is Cohen-Macaulay if and
      only if it is $S_2$ \cite[Theorem 1.3]{H-Y-Z-09}.
      Woodroofe \cite[Theorem 1]{Wo-08} proved that a graph with no chordless cycles other that
      cycles of length $3$ and $5$ is sequentially Cohen-Macaulay. We
      provide some results which extend this statement for $S_2$ property.
      In particular they imply that any graph with no chordless even cycle
      is sequentially $S_2$.

      At the end of this section we pose two questions on
      sequentially $S_r$ property of join of two simplicial
      complexes and topological invariance of $S_r$, respectively.\\

      Recall that to a simple graph $G$ one associates a simplicial
      complex $\Delta_G$ on $V(G)$, the set of vertices of $G$, whose faces
      correspond to the independent sets of
      vertices of $G$. A graph $G$ is said to be $S_r$ if $\Delta_G$ is a $S_r$
      simplicial complex. Likewise, $G$ is Cohen-Macaulay, sequentially
      Cohen-Macaulay and shellable if $\Delta_G$ satisfies either of these
      properties, respectively. We adopt the definition of shellability in
      the nonpure sense of Bj\"{o}rner-Wachs \cite{B-W-96}.

      We also recall the definition of a vertex decomposable
      simplicial complex. A simplicial complex $\Delta$ is
      recursively defined to be vertex decomposable if it is either
      a simplex or else has some vertex $v$ so that
      \begin{itemize}
      \item[(1)] Both $\Delta\setminus \{v\}$ and $\lk_\Delta(v)$
      are
      vertex decomposable, and
      \item[(2)] No face of $\lk_\Delta(v)$ is a facet of $\Delta\setminus
      \{v\}$.
      \end{itemize}

      The notion of vertex decomposability was introduced in the
      pure case by Provan and Billera \cite{PB} and was extended
      to nonpure complexes by Bj\"{o}rner and Wachs \cite{B-W-96}.\\

      Sequentially Cohen-Macaulay cycles have been characterized by
Francisco and Van Tuyl \cite[Proposition 4.1]{Fr-Va-07}. They are
just $C_3$ and $C_5$. Woodroofe has given a more geometric proof for
this result \cite[Theorem 3.1]{Wo-08}. In \cite[Proposition
1.6]{H-Y-Z-09} it is shown that the only $S_2$ cycles are $C_3$,
$C_5$ and $C_7$. We now generalize this result and prove that the
odd cycles are sequentially $S_2$ and they are the only sequentially
$S_2$ cycles.

\begin{thm}\label{cycles}

The cycle $C_n$ is sequentially $S_2$ if and only if $n$ is odd.

\end{thm}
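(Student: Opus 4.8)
The plan is to apply the local characterization of sequentially $S_2$ complexes in Theorem \ref{thm:localcriteria} to the independence complex $\Delta=\Delta_{C_n}$. Write $m=\lfloor n/2\rfloor$, so that $\dim\Delta=m-1$ and the facets of $\Delta$ are the maximal independent sets of $C_n$. The key observation I would record first is that condition (ii) of Theorem \ref{thm:localcriteria} holds for \emph{every} $n$: for any vertex $x$ the link $\lk_\Delta(x)$ is the independence complex of the graph obtained from $C_n$ by deleting $x$ together with its two neighbors, namely the path $P_{n-3}$. Since a forest is sequentially Cohen--Macaulay (e.g.\ \cite[Theorem 1]{Wo-08}) and sequentially Cohen--Macaulay implies sequentially $S_2$ (a sequentially Cohen--Macaulay filtration has Cohen--Macaulay, hence $S_2$, quotients), condition (ii) is automatic. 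Thus the parity of $n$ can enter only through condition (i), the connectedness of the pure skeletons $\Delta^{[i]}$ for $i\ge 1$, and the whole theorem reduces to analyzing these skeletons.

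For the direction ``$n$ even $\Rightarrow$ not sequentially $S_2$'', write $n=2m$ with $m\ge 2$. Here the maximum independent sets of $C_{2m}$ are exactly the two color classes of the bipartition, which are disjoint sets of size $m$; hence the top pure skeleton $\Delta^{[m-1]}$ is generated by two disjoint facets of dimension $m-1\ge 1$ and is therefore disconnected. This violates condition (i), and equivalently, by Theorem \ref{thm:pure skeletons}, the pure skeleton $\Delta^{[m-1]}$ fails to be $S_2$; so $C_{2m}$ is not sequentially $S_2$.

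For the direction ``$n$ odd $\Rightarrow$ sequentially $S_2$'', write $n=2m+1$; it remains to verify condition (i), which is vacuous when $m=1$, so assume $m\ge 2$. I would first treat the top skeleton $\Delta^{[m-1]}$, generated by the maximum independent sets (size $m$). Each such set omits a single pair of cyclically adjacent vertices, so there are exactly $2m+1$ of them, arranged in a cyclic family in which consecutive members share $m-1$ vertices. Thus any two top facets are joined by a chain of facets meeting in faces of dimension $m-2\ge 0$, and since each facet is itself connected this shows $\Delta^{[m-1]}$ is connected. To pass to the lower skeletons I would use the inclusion $(\Delta^{[m-1]})^{(i)}\subseteq \Delta^{[i]}$ of the ordinary $i$-skeleton of the top pure skeleton into $\Delta^{[i]}$, valid because any face of dimension $\le i$ contained in an $(m-1)$-face already sits inside some $i$-dimensional face. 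The $i$-skeleton of the connected complex $\Delta^{[m-1]}$ is connected for $i\ge 1$, and it contains every vertex of $\Delta^{[i]}$ since in an odd cycle every vertex lies in some maximum independent set. Hence $\Delta^{[i]}$ is connected for all $1\le i\le m-1$, establishing (i).

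The main obstacle I anticipate is the connectedness analysis for odd cycles: one must pin down the combinatorial structure of the maximum independent sets of $C_{2m+1}$ precisely enough to exhibit the connecting chain in the top skeleton, and then argue cleanly that connectivity together with the full vertex set propagates down to every lower pure skeleton. The even case, by contrast, is immediate once the two disjoint maximum independent sets are identified.
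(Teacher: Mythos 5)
Your proposal is correct, and its overall architecture coincides with the paper's: both directions hinge on Theorem \ref{thm:localcriteria}, condition (ii) is discharged in both cases by identifying $\lk_\Delta(x)$ with the independence complex of $P_{n-3}$ and citing sequential Cohen--Macaulayness of forests, and the even case is settled in both by observing that the two colour classes are the only maximum independent sets of $C_{2m}$, so the top pure skeleton is two disjoint simplices. Where you genuinely diverge is in verifying condition (i) for odd $n$. The paper proves that every $1$-dimensional face of $\Delta$ extends to a facet of maximal dimension (by splitting the cycle into the two arcs determined by an independent pair), deduces $(\Delta^{[i]})^{[1]}=\Delta^{[1]}$ for all $i\ge 1$, and then checks connectedness of the single graph $\Delta^{[1]}$ (the complement of $C_n$) directly. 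You instead enumerate the $2m+1$ maximum independent sets, chain them together inside the top pure skeleton, and push connectivity down to the lower pure skeletons via the spanning subcomplex $(\Delta^{[m-1]})^{(i)}\subseteq\Delta^{[i]}$. Both arguments are sound; the paper's reduces everything to one graph and avoids enumerating facets, while yours gives more explicit structural information about $\Delta^{[m-1]}$. One point in your write-up needs care: the ``cyclic family'' in which consecutive maximum independent sets share $m-1$ vertices is the orbit under rotation by \emph{two} (which exhausts all $2m+1$ sets since $\gcd(2,2m+1)=1$); under the more obvious orderings --- rotation by one, or ordering by the omitted adjacent pair --- consecutive members are actually disjoint (e.g.\ $\{1,3\}$ and $\{2,4\}$ in $C_5$). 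The claim as you need it is true, but you should state which cyclic order you mean and verify the overlap $|F\cap(F+2)|=m-1$ explicitly.
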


\begin{proof}
Let $n=2k$.  Then $\Delta=\Delta_{C_n}$ has only two facets of
dimension $2k-1 \ (=\dim \Delta)$, namely, $\{1,3,\ldots,2k-1\}$ and
$\{2,4,\ldots,2k\}$. Thus $\Delta^{[k-1]}$ is the union of two
disjoint $(k-1)$-simplices. In particular, it is disconnected,
contradicting Terai's criterion for $\Delta^{[k-1]}$ to be $S_2$.

Let $n$ be an odd integer. To show that $C_n$ is sequentially $S_2$,
by Theorem \ref{thm:localcriteria}, we need to show that
$\Delta^{[i]}$ is connected for all $i\ge 1$ and $\lk_\Delta(x)$ is
sequentially $S_2$ for all $\{x\} \in \Delta$. But $\lk_\Delta(x)$
is the simplicial complex of a $(n-3)$-path $P_{n-3}$ and so it is
sequentially Cohen-Macaulay, e.g., by \cite[Theorem 3.2]{Fr-Va-07}.

To prove the connectedness of the pure $i$-skeleton of $\Delta$ we
first claim that for each $E\in\Delta$ with $\dim E=1$, there exists
$F\in\Delta$ such that $\dim F=\dim \Delta$ and $E\subseteq F$. We
know that $$1+\dim\Delta=\max\{\# F|F\,\,\mbox{is an independent set
of}\,\, C_n\}=(n-1)/2.$$ Let $E=\{x,y\}\in\Delta$. The vertices $x$
and $y$ divide $C_n$ into two disjoint parts $A_1$ and $A_2$. Since
$n$ is odd, one of these parts has odd number of vertices and the
other has even number of vertices. Let $A_1$ has odd number of
vertices and $\#A_1 =t$. For $i=1,2$, let $F_i$ be the face of
$\Delta$ obtained from $A_i$ excluding the end points of $A_i$ such
that it has the maximum cardinality. Then $\#F_1 = (t-2)/2$ and
$\#F_2 = (n-t-3)/2$. Let $F=F_1\cup F_2\cup E$.  Then $\#F=(n-1)/2$
and $E\subseteq F$. Therefore the claim holds.

Let $E\in \Delta$ with $\dim E=1$. Then there exists $F\in\Delta$
such that $E\subseteq F$ and $\dim F=\dim\Delta$. Thus for each
$i\ge 1$ there exists $H\in\Delta$ such that $\dim H=i$ and
$E\subseteq H\subseteq F$. Therefore $E\subseteq H\in\Delta^{[i]}$.
Thus we have shown that $E\in\Delta$ with $\dim E=1$ if and only if
$E\in\Delta^{[i]}$ for all $i\ge 1$. Therefore $(\Delta^{[i]})^{[1]}
=\Delta^{[1]}$.

On the other hand $\Delta^{[i]}$ is connected if and only if
$(\Delta^{[i]})^{[1]}$ is connected. Therefore it is enough to show
that $\Delta^{[1]}$ is connected. Let $x$ and $y$ be two elements in
$\V$. If $x$ is not adjacent to $y$, then $\{x,y\}\in\Delta$. If $x$
is adjacent to $y$, then there exists $z\in\V$ such that $x$ is not
adjacent to $z$ and $y$ is not adjacent to $z$. Therefore $\{x,z\}$
and $\{y,z\}$ belong to $\Delta$ and hence $\Delta^{[1]}$ is
connected.

\end{proof}

The following result completes the characterization of cycles with
respect to the property $S_r$. We will give two proofs using some
extra data from the algebraic proof of \cite[Proposition
4.1]{Fr-Va-07} and the geometric proof of \cite[Theorem 3.1]{Wo-08}.

\begin{prop}\label{S3cycles}
The cycle $C_n$ is sequentially $S_3$ if and only if $n=3, 5$, i.e.,
if and only if $C_n$ is Cohen-Macaulay.
\end{prop}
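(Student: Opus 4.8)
The plan is to prove the two directions separately, reducing the forward direction to a single homological statement about pure skeletons. For the easy direction, note that $C_3$ and $C_5$ are Cohen--Macaulay: $\Delta_{C_3}$ is a set of three points and $\Delta_{C_5}$ is a pentagon, both Cohen--Macaulay. A Cohen--Macaulay complex is sequentially Cohen--Macaulay, hence sequentially $S_r$ for every $r\ge 2$; in particular $C_3$ and $C_5$ are sequentially $S_3$.

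For the converse, suppose $C_n$ is sequentially $S_3$. Then it is sequentially $S_2$, so by Theorem \ref{cycles} the integer $n$ is odd; it remains to rule out odd $n\ge 7$. Write $\Delta=\Delta_{C_n}$, of dimension $d-1=(n-3)/2$. By Theorem \ref{thm:pure skeletons}, $\Delta$ is sequentially $S_3$ if and only if every pure skeleton $\Delta^{[j]}$ is $S_3$, and for a $j$-dimensional complex Terai's criterion shows that the conditions defining $S_3$ which are not already implied by $S_2$ are exactly the vanishings $\widetilde{H}_1(\lk_{\Delta^{[j]}}F;k)=0$ for $\#F\le j-2$. I claim all of these with $F\ne\emptyset$ hold automatically. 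Indeed $\lk_\Delta(x)=\Delta_{P_{n-3}}$ is the independence complex of a path, hence sequentially Cohen--Macaulay and a fortiori sequentially $S_3$; using $\lk_{\Delta^{[j]}}(x)=(\lk_\Delta x)^{[j-1]}$ together with the fact that $S_3$ passes to links (Lemma \ref{lem:linkage}), every link of a nonempty face in a pure skeleton of $\Delta$ is $S_3$, so its first reduced homology vanishes in the required range. Since $n$ is odd, $C_n$ is moreover sequentially $S_2$, so all the remaining ($S_2$-level) conditions on every pure skeleton hold as well. Consequently, for odd $n$ the complex $C_n$ is sequentially $S_3$ if and only if $\widetilde{H}_1(\Delta^{[j]}_{C_n};k)=0$ for all $2\le j\le (n-3)/2$. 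For $n=3,5$ this range of $j$ is empty, which recovers the easy direction; the whole problem is therefore to show that for odd $n\ge 7$ some pure skeleton has nonzero first homology.

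This last point is the crux, and I would attack it in two ways, matching the two proofs of the Cohen--Macaulay case. For $n=7$ the complex $\Delta_{C_7}$ is already pure of dimension $2$, equal to its own top skeleton, and is homotopy equivalent to $S^1$, so $\widetilde{H}_1\ne 0$ immediately. For general odd $n\ge 7$ I would focus on the top pure skeleton $\Delta^{[d-1]}_{C_n}$ generated by the maximum independent sets: a direct count of faces gives reduced Euler characteristic $-1$ (as one checks for $n=7$ and $n=9$), strongly suggesting that this skeleton is a homotopy circle. Geometrically, in the spirit of Woodroofe \cite[Theorem 3.1]{Wo-08}, I would exhibit an explicit $1$-cycle, namely the Hamiltonian cycle $\{1,3\},\{3,5\},\dots,\{n-1,1\}$ built from the distance-two independent pairs (which closes up into a single cycle precisely because $n$ is odd, and every one of whose edges lies in $\Delta^{[d-1]}$ by the extension argument in the proof of Theorem \ref{cycles}), and show it does not bound a $2$-chain. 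Algebraically, in the spirit of Francisco and Van Tuyl \cite[Proposition 4.1]{Fr-Va-07}, I would instead apply Corollary \ref{Cor:Square_Component}: $C_n$ is sequentially $S_3$ iff the cover ideal $J(C_n)=I_{(\Delta_{C_n})^\vee}$ is componentwise linear in the first three steps, and I would read off from the known graded Betti numbers of $J(C_n)$ a nonzero $\beta_{i,i+t}$ with $i\le 2$ and $t$ not the linear value.

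The main obstacle is exactly the non-bounding claim in the geometric argument (equivalently, pinning down the nonvanishing low Betti number in the algebraic one), together with verifying that the obstruction sits correctly for \emph{every} odd $n$. The Euler-characteristic count on the top skeleton only forces a relation of the shape $\widetilde{H}_1=1+\widetilde{H}_2-\widetilde{H}_3$, so one must genuinely rule out homological cancellation, either by producing a $1$-cocycle that pairs nontrivially with the cycle above, or by invoking the explicit resolution data borrowed from \cite{Fr-Va-07} and \cite{Wo-08}, which is what makes these two references the natural inputs.
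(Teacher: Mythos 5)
Your reduction is correct and matches the paper's strategy: the easy direction via Cohen--Macaulayness of $C_3,C_5$, the reduction to odd $n\ge 7$ via Theorem \ref{cycles}, and the observation (via Theorem \ref{thm:pure skeletons}, Terai's criterion, Lemma \ref{lem:linkage}, and the sequential Cohen--Macaulayness of $\lk_\Delta(x)=\Delta_{P_{n-3}}$) that everything comes down to the vanishing of $\widetilde{H}_1(\Delta^{[j]};k)$ for $2\le j\le (n-3)/2$, with the top pure skeleton as the natural candidate for failure. But the decisive step --- that $\widetilde{H}_1$ of the top pure skeleton is actually nonzero for every odd $n\ge 7$ --- is exactly what you do not prove. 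You candidly flag it as ``the main obstacle'': the reduced Euler characteristic being $-1$ only gives $\widetilde{H}_1=1+\widetilde{H}_2-\widetilde{H}_3$ and cannot rule out cancellation, and the explicit $1$-cycle of distance-two pairs is exhibited but never shown to be non-bounding. As it stands, the argument establishes an equivalence (sequentially $S_3$ iff these $\widetilde{H}_1$ vanish) but not the nonvanishing itself, so the converse direction is incomplete.

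The gap is closed by precisely the citations you gesture at but do not invoke as established facts. The paper's second proof simply quotes \cite[Lemma 3.2]{Wo-08}: for $n=2t+1\ge 7$ the pure top skeleton $\Delta^{[t-1]}$ is homotopy equivalent to $S^1$, whence $\widetilde{H}_1(\Delta^{[t-1]};k)=k$ and (via Hochster's formula and Auslander--Buchsbaum, or directly via Terai's criterion with $F=\emptyset$, $0\le (t-1)-2$) the top skeleton fails $S_3$. The paper's first proof likewise takes the graded Betti numbers of the cover ideal from the proof of \cite[Theorem 4.1]{Fr-Va-07} as given: for $n=2t+1>7$ the component $J_{[t+1]}$ fails linearity within the first three steps, and for $n=7$ the ideal $J$ is generated in degree $4$ with resolution linear in two steps but not three, so Corollary \ref{Cor:Square_Component} applies. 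If you replace your conditional ``I would show it does not bound'' with an appeal to either of these specific results, your proof becomes complete and is essentially the paper's; without that, the homological nonvanishing for all odd $n\ge 7$ remains unproved.
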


\begin{proof} The first proof: Let $\Delta = \Delta_{C_n}$. Let $n=2t+1 > 7$.
By the proof of \cite[Theorem4.1]{Fr-Va-07}, if $J$ is the Alexander
dual of $I_\Delta$, then $J_{[t+1]}$ fails to have linear resolution
in the first $3$ steps. Hence $C_n$ is not sequentially $S_3$ by
Corollary \ref{Cor:Square_Component}.

For $n=7$, again we follow the proof of \cite[Theorem
4.1]{Fr-Va-07}. In this case, the ideal $J$ is generated in degree
$4$. Thus the resolution of $J$ is the same as the resolution of
$J_{[4]}$. Moreover, the resolution of $J$ is linear in the first
$2$ steps but not in step $3$. Thus by Corollary
\ref{Cor:Square_Component}, $C_7$ is not sequentially $S_3$.

The second proof: Let $n=2t+1 \ge 7$. By \cite[Lemma 3.2]{Wo-08},
$\Delta^{[t-1]}$ is homotopic to the circle $S^1$. Thus
$\widetilde{H}_1(\Delta^{[t-1]};k) = k$. By Hochster's formula on
Betti numbers of simplicial complexes we have ${\rm depth}
(k[\Delta^{[t-1]}]) \le 2$. Thus $k[\Delta^{[t-1]}]$ does not
satisfy $S_3$. Hence $C_n$ is not sequentially $S_3$.

\end{proof}

      We now outline the analogue statements of \cite[Section 3]{V-V-08}
      to conclude the sequentially $S_r$ counterparts of some of the results there.

      \begin{lem} \label{lem:degree one} (see \cite[Lemma 3.9]{V-V-08})
      Let $G$ be a bipartite graph with bipartition $\{x_1,\ldots,x_m\}$
      and $\{y_1,\ldots,y_n\}$. If $G$ is sequentially $S_r$, then there
      is a vertex $v \in V(G)$ with $\deg(v)=1$.
      \end{lem}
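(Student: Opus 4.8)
The plan is to reduce at once to the case $r=2$. Since Serre's condition $S_r$ implies $S_{r-1}$, each quotient in an $S_r$ filtration is also an $S_{r-1}$ module, so a sequentially $S_r$ module is sequentially $S_{r-1}$; iterating, a sequentially $S_r$ graph with $r\ge 2$ is sequentially $S_2$. Hence it suffices to prove the statement for $r=2$, and I argue by contraposition: assuming $G$ has no vertex of degree $1$, I will show that $\Delta=\Delta_G$ is not sequentially $S_2$. The tool is Theorem \ref{thm:localcriteria}: applied to $\Delta$ and then iterated to links of links (each vertex-link of a sequentially $S_2$ complex is again sequentially $S_2$), it shows that if $\Delta$ were sequentially $S_2$, then for every face $F$ the pure skeleton $(\lk_\Delta F)^{[i]}$ would be connected for all $i\ge 1$. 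Since $\lk_{\Delta_G}(F)=\Delta_{G\setminus N[F]}$, my goal is to locate a face $F$ and an index $i$ for which this fails.

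First I would dispose of isolated vertices. If $v$ has degree $0$, then it is a cone point of $\Delta$ and $\lk_\Delta(v)=\Delta_{G\setminus v}$; by Theorem \ref{thm:localcriteria}(ii) a sequentially $S_2$ cone forces its base to be sequentially $S_2$, so if $G\setminus v$ fails to be sequentially $S_2$ then so does $G$. As $G\setminus v$ is still bipartite and still free of degree-$1$ vertices, I may iterate and reduce to the case where $G$ has at least one edge and minimum degree at least $2$; in particular both color classes $X$ and $Y$ are maximal independent sets, hence facets of $\Delta$.

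It remains to produce, from the hypothesis $\delta(G)\ge 2$, a disconnected pure skeleton of some link. Two model situations already display the mechanism and, read through Terai's criterion, replace the Cohen--Macaulay input of \cite[Lemma 3.9]{V-V-08}. If $G\setminus N[F]$ is a complete bipartite graph $K_{p,q}$ with $p,q\ge 2$ (for instance $F=\emptyset$ when $G$ itself is complete bipartite), then its pure $1$-skeleton is the complement graph $\overline{K_{p,q}}=K_p\sqcup K_q$, which is disconnected, so $(\lk_\Delta F)^{[1]}$ is disconnected. If instead the only maximal independent sets of $G\setminus N[F]$ are its two color classes and these are disjoint (as happens for $F=\emptyset$, $G=C_6$), then the top pure skeleton is a disjoint union of two simplices and is again disconnected. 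The heart of the proof --- and the step I expect to be hardest --- is the combinatorial argument showing that the absence of a degree-$1$ vertex in a bipartite graph always forces one of these color-class splittings in a suitably chosen link $G\setminus N[F]$; this is exactly the content of the Van Tuyl--Villarreal argument \cite[Lemma 3.9]{V-V-08}, which carries over once Reisner's criterion for the relevant skeleton is replaced by Terai's connectivity criterion together with Theorem \ref{thm:pure skeletons}. Any such disconnection contradicts the conclusion of the first paragraph, so $\Delta_G$ is not sequentially $S_2$, which completes the contrapositive.
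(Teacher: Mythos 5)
Your framework is sound as far as it goes: the reduction to $r=2$, the contrapositive, the removal of isolated vertices, and the reformulation via iterated use of Theorem \ref{thm:localcriteria} (sequential $S_2$-ness of $\Delta_G$ is equivalent to connectedness of every pure $i$-skeleton, $i\ge 1$, of every link $\lk_{\Delta_G}F=\Delta_{G\setminus N_G[F]}$) are all correct. But there is a genuine gap at exactly the point you yourself flag as ``the heart of the proof'': you never establish that a bipartite graph of minimum degree at least $2$ must admit a face $F$ and an index $i\ge 1$ for which $(\Delta_{G\setminus N_G[F]})^{[i]}$ is disconnected. Since, by your own first paragraph, this disconnection statement is \emph{equivalent} to the failure of sequential $S_2$-ness, you have in effect only restated the lemma in combinatorial form. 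The justification you offer --- that this step ``is exactly the content of the Van Tuyl--Villarreal argument'' --- is a misattribution: the proof of \cite[Lemma 3.9]{V-V-08} is algebraic. It passes to the Alexander dual (cover) ideal, writes down an explicit map $f$ of free modules, and derives a contradiction from the existence of a non-linear first syzygy of a graded component, invoking the Herzog--Hibi componentwise-linearity criterion. Nothing in that argument locates a disconnected pure skeleton of a link, and it is not explained how their syzygy computation would ``carry over'' to your connectivity claim. Your two model situations ($K_{p,q}$ and $C_6$) illustrate the mechanism but do not treat a general bipartite graph with $\delta(G)\ge 2$.

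For comparison, the paper's proof stays entirely on the algebraic side: it runs the Van Tuyl--Villarreal proof verbatim and observes that the only new input needed is that, under the sequentially $S_r$ hypothesis with $r\ge 2$, the kernel of the linear map $f$ in that proof is still generated by linear syzygies; this is supplied by Proposition \ref{prop:Square_Component} (componentwise linearity in the first $r$ steps, hence linearity of the first syzygies, via Corollary \ref{Cor:Square_Component}). If you wish to keep your combinatorial route, you must supply an actual argument producing the disconnected pure skeleton of some $\Delta_{G\setminus N_G[F]}$ from the hypothesis $\delta(G)\ge 2$; as written, that step is asserted rather than proved, and the citation offered in its place does not contain it.
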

      \begin{proof}
      The only modification needed in the proof of \cite[Lemma
      3.9]{V-V-08} is to justify that the kernel of the linear map $f$
      used in that proof, is generated by linear syzygies. But under the
      hypothesis of the lemma, this is proved in Proposition
      \ref{prop:Square_Component}.
      \end{proof}

      For a graph $G$ and a vertex $x\in V_G$,
      the set of neighbors of $x$ will be denoted by $N_G(x)$. For
      $F\in \Delta_G$ we set:
      $$N_G(F)= \cup_{x\in F} N_G(x).$$
      We also use the following notation:
      $$N_G[x] = \{x\}\cup N_G(x),$$
      $$N_G[F] = F \cup N_G(F).$$

      The following lemma gives a recursive procedure to check if a graph
      fails to be sequentially $S_r$. It is a sequentially $S_r$ version of
      \cite[Theorem 3.3]{V-V-08}.

      \begin{lem} \label{lem:sequential-graph}
      Let $G$ be  graph and $x$ a vertex of $G$. Let $G'=G\setminus N_G[x]$.
      If $G$ is sequentially $S_r$, then $G'$ is also
      sequentially $S_r$.
      \end{lem}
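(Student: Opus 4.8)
The plan is to reduce the statement to a fact about vertex links that is already within reach of the tools developed above. The key first observation is that deleting the closed neighborhood of a vertex, at the level of independence complexes, is exactly taking a link: a subset $F\subseteq V(G)$ with $x\notin F$ is a face of $\lk_{\Delta_G}(x)$ if and only if $F\cup\{x\}$ is independent in $G$, i.e. if and only if $F$ is independent and $F\cap N_G(x)=\emptyset$; combined with $x\notin F$ this says precisely that $F$ is an independent set contained in $V(G)\setminus N_G[x]=V(G')$. Hence $\Delta_{G'}=\lk_{\Delta_G}(x)$, and the lemma is equivalent to the statement that the link of a vertex of a sequentially $S_r$ complex is again sequentially $S_r$.

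Next I would prove this link statement via the pure-skeleton criterion of Theorem \ref{thm:pure skeletons}. By hypothesis $(\Delta_G)^{[j]}$ is $S_r$ for every $j$, and I want to conclude that $(\lk_{\Delta_G}(x))^{[i]}$ is $S_r$ for every $i$. The bridge is the commutation of pure skeletons with links,
\[
(\lk_{\Delta_G}(x))^{[i]} = \lk_{(\Delta_G)^{[i+1]}}(x),
\]
the same identity already used in the proof of Theorem \ref{thm:localcriteria}: both sides are generated by those $i$-faces $F$ with $x\notin F$ for which $F\cup\{x\}$ is an $(i+1)$-face of $\Delta_G$. Granting it, the right-hand side is the link, at the face $\{x\}$, of the complex $(\Delta_G)^{[i+1]}$, which is $S_r$; so by Lemma \ref{lem:linkage} this link is $S_r$ as well (in the degenerate case where $\{x\}$ is not a face of $(\Delta_G)^{[i+1]}$ both sides are empty and there is nothing to prove). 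Thus $(\lk_{\Delta_G}(x))^{[i]}$ is $S_r$ for all $i$, so $\lk_{\Delta_G}(x)=\Delta_{G'}$ is sequentially $S_r$, whence $G'$ is sequentially $S_r$.

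The only step needing genuine care --- and the one I expect to be the main obstacle --- is the skeleton/link commutation identity, in particular the index shift (the $i$-faces of the link correspond to $(i+1)$-faces of $\Delta_G$ through $x$). Since this is a purely combinatorial face count and is already exploited in Theorem \ref{thm:localcriteria}, I would either cite that usage or record the one-line verification; the remainder is a direct application of Theorem \ref{thm:pure skeletons} and Lemma \ref{lem:linkage}, with no computation required.
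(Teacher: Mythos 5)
Your proof is correct and follows essentially the same route as the paper's: the paper defers to the proof of Van Tuyl--Villarreal's Lemma 3.3, which rests on exactly the identification $\Delta_{G'}=\lk_{\Delta_G}(x)$, the skeleton/link commutation $(\lk_\Delta(x))^{[i]}=\lk_{\Delta^{[i+1]}}(x)$, and the substitution of Theorem \ref{thm:pure skeletons} and Lemma \ref{lem:linkage} for their Cohen--Macaulay counterparts. You have simply written out in full what the paper leaves as a citation, including the index shift and the degenerate empty-link case, and all steps check out.
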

      \begin{proof}
      The proof is identical with that of \cite[Lemma 3.3]{V-V-08}.
      We only need to use Theorem \ref{thm:pure
      skeletons} and Lemma \ref{lem:linkage} instead of their sequentially
      Cohen-Macaulay and Cohen-Macaulay
      counterparts, respectively.
      \end{proof}

      Lemma \ref{lem:sequential-graph} could be extended further.

      \begin{cor}
      Let $G$ be a graph  which is sequentially $S_r$. Let $F$ be an
      independent set in $G$. Then the graph $G'=G\setminus N_G[F]$
      is sequentially $S_r$.

      \end{cor}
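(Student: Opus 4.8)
The plan is to argue by induction on the cardinality of $F$, peeling off one vertex at a time and appealing to the single-vertex case already established in Lemma \ref{lem:sequential-graph}. If $F=\emptyset$ then $N_G[F]=\emptyset$ and $G'=G$, so there is nothing to prove; and if $\#F=1$ the statement is precisely Lemma \ref{lem:sequential-graph}. So I would assume $\#F=k\ge 2$ and that the corollary holds for every independent set of cardinality less than $k$ in any sequentially $S_r$ graph.

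For the inductive step I would fix a vertex $x\in F$ and set $G_1=G\setminus N_G[x]$. By Lemma \ref{lem:sequential-graph}, $G_1$ is again sequentially $S_r$. I would then consider $F_1=F\setminus\{x\}$. Because $F$ is independent and $x\in F$, no vertex of $F_1$ lies in $N_G[x]$ (it is neither $x$ nor adjacent to $x$), so $F_1\subseteq V(G_1)$; and being independent in $G$ it remains independent in the induced subgraph $G_1$. Thus the induction hypothesis applies to the pair $(G_1,F_1)$ and yields that $G_1\setminus N_{G_1}[F_1]$ is sequentially $S_r$.

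The crux is then the purely set-theoretic identity
$$G\setminus N_G[F] \;=\; G_1\setminus N_{G_1}[F_1],$$
which I would verify on the level of vertex sets, both sides being induced subgraphs of $G$. Writing $A=N_G[x]$, $B=F_1$ and $C=\bigcup_{y\in F_1}N_G(y)$, one has $N_G[F]=A\cup B\cup C$, while $N_{G_1}[F_1]=B\cup(C\setminus A)$, since the neighbours of a vertex taken in $G_1$ are exactly its $G$-neighbours surviving outside $A$. Using $B\cap A=\emptyset$ together with $A\cup(C\setminus A)=A\cup C$, the vertex set $V(G_1)\setminus N_{G_1}[F_1]=(V(G)\setminus A)\setminus(B\cup(C\setminus A))$ collapses to $V(G)\setminus(A\cup B\cup C)=V(G)\setminus N_G[F]$. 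Since an induced subgraph of an induced subgraph is again an induced subgraph on the same vertex set, the two graphs coincide, and therefore $G'=G\setminus N_G[F]$ is sequentially $S_r$, completing the induction.

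The main obstacle I anticipate is not any deep homological input, as Lemma \ref{lem:sequential-graph} already supplies that, but rather keeping the bookkeeping of neighbourhoods straight when passing to the subgraph $G_1$: one must be sure that deleting $N_G[x]$ first and then the closed neighbourhood of the remaining vertices computed \emph{inside} $G_1$ really reproduces the simultaneous deletion of $N_G[F]$. The independence of $F$ is exactly what makes this work, since it guarantees both that $F_1$ survives intact in $G_1$ and that no overlap among the closed neighbourhoods spoils the identity above.
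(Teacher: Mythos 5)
Your proof is correct and is essentially the paper's argument: the paper disposes of this corollary with the single line ``this follows by repeated applications of Lemma \ref{lem:sequential-graph},'' and your induction on $\#F$, together with the verification that $G\setminus N_G[F]=G_1\setminus N_{G_1}[F_1]$ when $F$ is independent, is exactly the bookkeeping that line leaves implicit. No gap; you have simply written out the details the authors omitted.
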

      \begin{proof}
      This follows by repeated applications of Lemma
      \ref{lem:sequential-graph}.
      \end{proof}

      The following generalizes a result of Francisco and H\`{a} \cite[Theorem
      4.1]{Fr-Ha-08} which is also proved by Van Tuyl and Villarreal  by a
      different method (see \cite[Corollary 3.5]{V-V-08}).

      Recall that for a subset $S=\{y_1,\ldots, y_m\}$ of a graph $G$, the graph $G\cup
      W(S)$ is obtained from $G$ by adding an edge (whisker) $\{x_i,y_i\}$
      to $G$ for all $i=1,\ldots, m$, where $x_1,\ldots,x_m$ are new
      vertices.

       \begin{cor}\label{whisker}
      Let  $S\subset V(G)$ and suppose that the graph $G\cup W(S)$ is sequentially
      $S_r$, then $G\setminus S$ is sequentially $S_r$.
      \end{cor}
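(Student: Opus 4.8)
The plan is to realize $G\setminus S$ as the deletion of a single closed neighborhood inside the sequentially $S_r$ graph $G\cup W(S)$, and then invoke the deletion stability established just above.

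Write $S=\{y_1,\ldots,y_m\}$ and let $x_1,\ldots,x_m$ be the whisker vertices, so that $H:=G\cup W(S)$ has vertex set $V(G)\cup\{x_1,\ldots,x_m\}$ and edge set consisting of the edges of $G$ together with the pendant edges $\{x_i,y_i\}$. Set $F=\{x_1,\ldots,x_m\}$. First I would check that $F$ is an independent set of $H$: each $x_i$ is a new vertex adjacent only to $y_i$, so no two of the $x_i$ are joined by an edge.

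Next I would compute the closed neighborhood of $F$ in $H$. Since $N_H(x_i)=\{y_i\}$, we get $N_H(F)=\{y_1,\ldots,y_m\}=S$ and hence $N_H[F]=F\cup S$. Deleting these vertices removes all the whisker tips $x_i$ and all the vertices of $S$, so the surviving vertex set is exactly $V(G)\setminus S$; the only edges destroyed beyond those incident to $S$ are the pendant edges, which have already disappeared with their tips. Therefore $H\setminus N_H[F]=G\setminus S$.

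Finally I would apply the Corollary following Lemma \ref{lem:sequential-graph} (deletion of the closed neighborhood of an independent set preserves the sequentially $S_r$ property; equivalently, one iterates Lemma \ref{lem:sequential-graph} over $x_1,\ldots,x_m$) to the sequentially $S_r$ graph $H$ and the independent set $F$, concluding that $H\setminus N_H[F]=G\setminus S$ is sequentially $S_r$. There is no genuine obstacle here beyond bookkeeping; the one point to verify with care is the vertex/edge identification $H\setminus N_H[F]=G\setminus S$, confirming that removing $F\cup S$ from $H$ leaves precisely the induced subgraph $G\setminus S$ and introduces no spurious vertices or edges.
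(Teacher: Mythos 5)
Your proposal is correct and is essentially the paper's own argument: the paper proves this corollary by ``repeated applications of Lemma \ref{lem:sequential-graph},'' which is exactly your iteration over the independent set $F=\{x_1,\ldots,x_m\}$ of whisker tips, with the identification $ (G\cup W(S))\setminus N[F]=G\setminus S$ spelled out. The only difference is that you supply the bookkeeping the paper leaves implicit.
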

      \begin{proof}
      This also follows by repeated applications of Lemma
\ref{lem:sequential-graph}.
      \end{proof}

      Van Tuyl \cite[Theorem 2.10]{V-09} has proved that
      a bipartite  graph is vertex decomposable if and only if it is sequentially
      Cohen-Macaulay. We now generalize this result. Observe that our
      result also generalizes the authors' result which states that a
      bipartite graph is  Cohen-Macaulay if and only if it is $S_2$
      \cite[Theorem 1.3]{H-Y-Z-09}.

      First we need a more general result.

\begin{thm}\label{chordlessgeneral}
Let $G=(V,E)$ be a graph. Suppose $H =G \setminus N_G[F]$ satisfies
one of the conditions (i), (ii), and (iii)
 for any $F \in \Delta _G$ which is not a facet:
\begin{enumerate}
\item[(i)] $H$ has no chordless even cycle.
\item[(ii)]  $H$ has a simplicial vertex, i.e., for some $z\in
V(H)$, $N_H[z]$ is a complete graph.
\item[(iii)]  For some $t\ge 2$, $H$ has a chordless $(2t+1)$-cycle
which has  $t$ independent vertices of degree 2  in $H$.
\end{enumerate}
Then $G$ is sequentially $S_2$.
\end{thm}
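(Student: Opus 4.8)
The plan is to argue by induction on the number of vertices of $G$, using the local characterization of Theorem \ref{thm:localcriteria}. The basic dictionary is that for an independence complex $\lk_{\Delta_G}(F)=\Delta_{G\setminus N_G[F]}$ for every independent set $F$; in particular, the hypothesis says exactly that each proper link $\lk_{\Delta_G}(F)$ (taken over a non-facet $F$) is the independence complex of a graph of type (i), (ii) or (iii). By Theorem \ref{thm:localcriteria} it then suffices to verify the two conditions listed there for $\Delta=\Delta_G$: that $\Delta_G^{[i]}$ is connected for all $i\ge 1$, and that $\lk_{\Delta_G}(x)$ is sequentially $S_2$ for every vertex $x$. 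The base case of a graph with no edges, where $\Delta_G$ is a simplex, is trivial.

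First I would dispatch the link condition. For a vertex $x$ put $G'=G\setminus N_G[x]$, so that $\lk_{\Delta_G}(x)=\Delta_{G'}$ and $|V(G')|<|V(G)|$. The key observation is that the hypothesis is inherited by $G'$: for any independent set $F'$ of $G'$ one has $G'\setminus N_{G'}[F']=G\setminus N_G[\{x\}\cup F']$, and if $F'$ is not a facet of $\Delta_{G'}$ then $\{x\}\cup F'$ is not a facet of $\Delta_G$ (any vertex witnessing the former also witnesses the latter, since the relevant vertices avoid $N_G[x]$). Hence every graph $G'\setminus N_{G'}[F']$ over a non-facet $F'$ is again of type (i), (ii) or (iii), so $G'$ satisfies the hypothesis of the theorem. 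By the induction hypothesis $\Delta_{G'}=\lk_{\Delta_G}(x)$ is sequentially $S_2$, which gives condition (ii) of Theorem \ref{thm:localcriteria}.

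It remains to prove that $\Delta_G^{[i]}$ is connected for all $i\ge 1$, and here one uses that $G$ itself (the case $F=\emptyset$, for which $N_G[\emptyset]=\emptyset$) is of type (i), (ii) or (iii). Since a complex is connected if and only if its $1$-skeleton is, the natural route is to show, for each $i$, that every edge of $\Delta_G$ that extends to an $i$-face can be joined to the others inside the $1$-skeleton $(\Delta_G^{[i]})^{[1]}$; when every edge extends all the way to a facet of top dimension this collapses to the connectedness of $\Delta_G^{[1]}$, exactly as in the odd-cycle argument of Theorem \ref{cycles}. In case (i) (no chordless even cycle) and case (ii) (a simplicial vertex) I expect the connectivity to follow from such a direct path construction in the complement graph, using the rigidity these hypotheses impose on the maximal independent sets.

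The main obstacle is case (iii): here $G$ may genuinely contain chordless even cycles, so connectivity cannot be read off from the complement, and one must exploit the embedded chordless $(2t+1)$-cycle together with its $t$ independent degree-two vertices to construct the connecting paths in each pure skeleton. This is the analogue of the geometric analysis underlying $C_5$ and $C_7$, and I would treat it by the same Woodroofe-style argument (compare the use of \cite[Lemma 3.2]{Wo-08} in Proposition \ref{S3cycles}): the degree-two vertices force every relevant independent set to be linkable to a common one, so that $\widetilde{H}_0(\Delta_G^{[i]};k)=0$ for all $i\ge 1$. Once connectivity is established in all three cases, both hypotheses of Theorem \ref{thm:localcriteria} hold and $G$ is sequentially $S_2$, completing the induction.
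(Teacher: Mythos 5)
Your overall framework is the same as the paper's: induction on the number of vertices, reduction to the two conditions of Theorem \ref{thm:localcriteria}, and the observation that the hypothesis is inherited by $G'=G\setminus N_G[x]$ so that every vertex link is sequentially $S_2$ by induction. That part is fine (indeed you check the non-facet lifting more carefully than the paper does). But the mathematical heart of the theorem --- the connectivity of $\Delta_G^{[i]}$ for all $i\ge 1$, which is the only place the specific hypotheses (i), (ii), (iii) enter --- is not actually proved in your proposal. For cases (i) and (ii) you say you ``expect'' connectivity to follow from a path construction in the $1$-skeleton, and for case (iii) you explicitly defer to ``the same Woodroofe-style argument.'' These are statements of intent, not arguments, and the theorem is precisely the claim that such arguments exist.

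Moreover, the reduction you sketch for cases (i) and (ii) is not the right one. Connectivity of $\Delta_G^{[i]}$ is equivalent to connectivity of $(\Delta_G^{[i]})^{(1)}$, but this does \emph{not} collapse to connectivity of $\Delta_G^{[1]}$ unless every edge of $\Delta_G$ extends to an $i$-face lying in a facet of dimension $\ge i$; that held for odd cycles by an explicit claim in Theorem \ref{cycles}, but it fails for general graphs satisfying (i), (ii) or (iii). The paper instead argues directly on $i$-faces: given disjoint $i$-faces $X=\{x_1,\dots,x_{i+1}\}$ and $Y=\{y_1,\dots,y_{i+1}\}$, it produces a chain of $i$-faces with consecutive nonempty intersections. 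Under (i), the induced bipartite graph $B=G_{X\cup Y}$ has no odd cycle (bipartite) and no chordless even cycle (hypothesis), hence is a forest, hence has a vertex of degree $\le 1$, say $x_1$ adjacent to at most $y_1$; then $X,\ \{x_1,y_2,\dots,y_{i+1}\},\ Y$ is the desired chain. Under (ii), the simplicial vertex $z$ meets each of $X$ and $Y$ in at most one neighbor, so one routes $X \to \{z,x_2,\dots,x_{i+1}\} \to \{z,y_2,\dots,y_{i+1}\} \to Y$. Under (iii), one routes through $Z=\{z_1,\dots,z_{i+1}\}$ built from the independent degree-$2$ vertices on the chordless odd cycle (using $i+1\le t$), splitting into cases according to whether $X$ and $Y$ lie in $V(C)$. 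You would need to supply arguments of this kind (verifying in each step that the intermediate sets are genuinely independent) for your proof to be complete.
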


\begin{proof}
We prove the theorem by induction on $n$, the number of vertices of
$G$. The assertion holds for $n \le 3$. Now we assume that $n \ge
4$. Set $\Delta=\Delta_G$ and let $x \in V(G)$. Observe that $G'=
G\setminus N_G[x]$ satisfies the statement of the theorem. Thus it
is sequentially $S_2$ by the induction hypothesis. Hence by
\cite[Lemma 2.5]{V-V-08}, $\lk_G(x) = \Delta_{G'}$ is sequentially
$S_2$. Thus by Theorem \ref{thm:localcriteria} it is enough to show
that $\Delta ^{[i]}$ is connected for $1 \le i \le \dim \Delta$ . We
show that for any $X, Y \in \Delta$ with $\dim X= \dim Y=i$, there
is a chain $X=X_0,X_1, \dots , X_s=Y$ of $i$-faces of $\Delta$ such
that $X_{j-1} \cap X_{j} \ne \emptyset$ for $j=1,2, \dots , s$. We
may assume $X \cap Y = \emptyset$. For simplicity we set
$X=\{x_1,x_2, \dots, x_{i+1}\}$ and $Y= \{y_1,y_2,\dots, y_{i+1}\}$.

We assume that the condition (i) is satisfied for $G$. Set $B=G_{X
\cup Y}$, the restriction of $G$ to $X \cup Y$. Then $B$ is a
bipartite graph on the partition $X \cup Y$. Since $B$ is bipartite,
$B$ has no odd cycle. Since $B$ has no (chordless) even cycle by the
condition (i), $B$ is a forest. Then there exists a vertex with
degree 0 or 1 in $B$. We may assume that $x_1$ is such a vertex and
that $x_1$ is adjacent  at most to $y_1$. Set $X_1=\{x_1,y_2,\dots,
y_{i+1}\}$.  Then $X, X_1, Y$ is a desired chain.

We assume that the condition (ii) is satisfied for $G$. Then using
the hypotheses for $F=\emptyset$, there is a simplicial vertex $z$
in $G$. Assume $z \not\in X \cup Y$. Since $z$ is simplicial, $z$ is
adjacent to at most  one vertex in $X$. We may assume that  $z$ is
not  adjacent to $x_2, \dots, x_{i+1}$. Similarly, we may assume
that $z$ is not adjacent to $y_2,   \dots, y_{i+1}$. Set
$X_1=\{z,x_2,\dots, x_{i+1}\}$ and $X_2= \{z,y_2,\dots, y_{i+1}\}$.
Then $X, X_1, X_2, Y$ is a desired chain. Assume $z \in  X \cup Y$.
We may assume $z=y_1 \in  Y$. Then $X, X_1, Y$ is a desired chain.

Next we assume that the condition (iii) is satisfied for $G$. Then
for some $t \ge 2$ there exists a chordless $(2t+1)$-cycle $C$ which
has  $t$ vertices of degree 2 in $G$  which are independent in $G$.
Let $\{z_1, z_2, \dots ,z_t \} \subset  V(C)$  be an independent set
of vertices of $G$ such that $\deg _G z_j=2$ for $j=1,2, \dots, t.$

Case I. $X \cup Y \subset V(C).$ As in the case that the condition
(i) is satisfied, $B=G_{X \cup Y}$ is a bipartite graph on the
partition $X \cup Y$. Since $C$ has no chord, $B$ is a disjoint
union of paths. Then we can construct a desired chain as in the
above case.

Case II. $X \subset V(C)$, and $Y \cap (V(G) \setminus V(C)) \ne
\emptyset$. We may assume that $y_1 \in V(G) \setminus V(C)$. Note
that $i+1 \le t.$ Set $Y_1=\{y_1, z_2, \dots ,z_{i+1}  \}$ and $Z=
\{z_1, z_2, \dots ,z_{i+1} \}$. Then $Y, Y_1, Z $ is a chain with $Y
\cap Y_1 \ne \emptyset$, $Y_1 \cap Z  \ne \emptyset$. Between $Z$
and $X$,  we have a desired chain as in Case I. Hence we have a
desired chain between $X$ and $Y$.

Case III. $X \cap (V(G) \setminus V(C)) \ne \emptyset$ and $Y
\subset V(C)$. As in Case II.

Case IV. $X \cap (V(G) \setminus V(C)) \ne \emptyset$ and $Y \cap
(V(G) \setminus V(C)) \ne \emptyset$. As in Case II, we can
construct desired chains between $X$ and $Z$ and between $Y$ and
$Z$. Thus we have a desired chain between $X$ and $Y$ via $Z$.

\end{proof}

\begin{cor}\label{chordlessclass}
Let $G$ be a bipartite graph. The following conditions are
equivalent:
\begin{enumerate}
\item[(i)]$G$ is vertex decomposable.
\item[(ii)]$G$ is shellable.
\item[(iii)]$G$ is sequentially Cohen-Macaulay.
\item[(iv)]For any $F \in \Delta _G$ (including $\emptyset$) which is not a facet,
there is a vertex $v \in  G \setminus N_G[F]$  such that $\deg_{G
\setminus N_G[F]}(v) \le 1$.
\item[(v)]$G$ is sequentially $S_2$.
\end{enumerate}

\end{cor}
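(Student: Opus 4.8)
The plan is to prove the five conditions equivalent by establishing the cycle
(i) $\Rightarrow$ (ii) $\Rightarrow$ (iii) $\Rightarrow$ (v) $\Rightarrow$ (iv) $\Rightarrow$ (i).
The implications (i) $\Rightarrow$ (ii) $\Rightarrow$ (iii) are the standard facts that a vertex decomposable complex is shellable and a shellable complex is sequentially Cohen--Macaulay, in the nonpure sense of Bj\"orner--Wachs \cite{B-W-96}. For (iii) $\Rightarrow$ (v) I would note that a Cohen--Macaulay complex satisfies $S_2$; hence by Duval's theorem every pure skeleton of $\Delta_G$ is Cohen--Macaulay, so each is $S_2$, and Theorem \ref{thm:pure skeletons} gives that $\Delta_G$ is sequentially $S_2$. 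These three steps carry no real difficulty.

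For (v) $\Rightarrow$ (iv), fix $F \in \Delta_G$ that is not a facet. Since $F$ is an independent set, repeated application of Lemma \ref{lem:sequential-graph} shows that $H = G \setminus N_G[F]$ is again sequentially $S_2$, and $H$ is bipartite as an induced subgraph of $G$. Because $F$ is not a facet, $H$ has at least one vertex. If $H$ has an edge, Lemma \ref{lem:degree one} produces a vertex of degree one in $H$; if $H$ has no edge, every vertex of $H$ has degree zero. In either case $H$ has a vertex of degree at most one, which is exactly (iv). As a consistency check, this also yields (iv) $\Rightarrow$ (v) through Theorem \ref{chordlessgeneral}: a vertex of degree at most one is simplicial, since its closed neighborhood is a single vertex or an edge, so condition (ii) of that theorem holds at every non-facet $F$.

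The heart of the argument is (iv) $\Rightarrow$ (i), proved by induction on the number of vertices of $G$; this is the step I expect to be the main obstacle. The two ingredients are: (a) condition (iv) is inherited by every localization $G \setminus N_G[w]$ --- indeed, if $F'$ is a non-facet of $\Delta_{G \setminus N_G[w]}$ then $F = F' \cup \{w\}$ is a non-facet of $\Delta_G$ with $G \setminus N_G[F] = (G \setminus N_G[w]) \setminus N_{G\setminus N_G[w]}[F']$, so applying (iv) of $G$ at $F$ gives (iv) of $G \setminus N_G[w]$ at $F'$; and (b) if $x$ is a leaf with unique neighbor $y$, then $y$ is a shedding vertex of $\Delta_G$. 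For (b) one checks condition (2) of vertex decomposability: every face $S$ of $\lk_{\Delta_G}(y) = \Delta_{G\setminus N_G[y]}$ avoids $N_G[y]$, so $x \notin S$ and the only neighbor $y$ of $x$ is not in $S$, whence $S \cup \{x\}$ is a strictly larger face of $\Delta_G \setminus \{y\}$ and $S$ is not a facet there.

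To run the induction, apply (iv) at $F = \emptyset$ (a non-facet once $G$ has a vertex) to obtain a vertex $v$ with $\deg_G(v) \le 1$; if $G$ is edgeless then $\Delta_G$ is a simplex and we are done. If $\deg_G(v) = 0$, then $\Delta_G = v * \Delta_{G \setminus v}$ is a cone over $\Delta_{G\setminus N_G[v]}$, which is vertex decomposable by (a) and the induction hypothesis, and a cone is vertex decomposable if and only if its base is. If $\deg_G(v) = 1$ with neighbor $y$, then $\lk_{\Delta_G}(y) = \Delta_{G \setminus N_G[y]}$ is vertex decomposable by (a) and induction, while $\Delta_G \setminus \{y\} = \Delta_{G\setminus y} = v * \Delta_{G \setminus N_G[v]}$ is a cone and hence vertex decomposable for the same reason; together with (b) this exhibits $y$ as a shedding vertex realizing the vertex decomposition of $\Delta_G$. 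The subtlety to get right is that the inductive hypothesis is applied only to the localizations $G \setminus N_G[w]$, for which (iv) is genuinely inherited, rather than to arbitrary vertex-deleted subgraphs; the deletion $\Delta_G \setminus \{y\}$ is instead handled through the cone structure produced by the leaf $x$.
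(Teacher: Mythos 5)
Your proposal is correct, and while the outer implications (i)$\Rightarrow$(ii)$\Rightarrow$(iii) and the step (v)$\Rightarrow$(iv) match the paper's ingredients, the way you close the cycle is genuinely different. The paper never proves (iv)$\Rightarrow$(i) directly: it gets (iii)$\Rightarrow$(i) by citing Van Tuyl's theorem that sequentially Cohen--Macaulay bipartite graphs are vertex decomposable, proves (iv)$\Rightarrow$(v) by observing that a degree-$\le 1$ vertex is simplicial and invoking Theorem \ref{chordlessgeneral}(ii), and closes the loop with (v)$\Rightarrow$(ii) by induction, using Lemma \ref{lem:degree one} to find a leaf and then the Van Tuyl--Villarreal shellability gluing theorem. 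You instead prove (iv)$\Rightarrow$(i) by a self-contained induction: the observation that (iv) is inherited by every $G\setminus N_G[w]$, the check that the neighbor $y$ of a leaf is a shedding vertex (no face of $\lk_{\Delta_G}(y)$ is a facet of $\Delta_G\setminus\{y\}$ because the leaf can always be added), and the cone trick $\Delta_G\setminus\{y\}=v\ast\Delta_{G\setminus N_G[v]}$ to handle the deletion without needing (iv) for arbitrary vertex-deleted subgraphs --- that last point is exactly the subtlety that would otherwise break the induction, and you address it correctly. Your route buys two things: it removes the dependence on Van Tuyl's Theorem 2.10 and on the shellability gluing result, and your (iv)$\Rightarrow$(i) argument nowhere uses bipartiteness, so it proves the stronger fact that condition (iv) forces vertex decomposability for an arbitrary graph. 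The paper's route is shorter on the page but leans on external results; also note that your (v)$\Rightarrow$(iv) correctly handles the edgeless case of $H$, which Lemma \ref{lem:degree one} as literally stated does not cover. The only blemish is a harmless notational slip where the leaf is called $v$ in one place and $x$ in another.
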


\begin{proof}
$(i) \Rightarrow (ii)$: Follows from \cite[Theorem 11.3]{B-W-97}.

$(ii) \Rightarrow (iii)$: Follows from \cite[Chap. III, \S
2]{St-95}.

$(iii) \Rightarrow (i)$: Follows from \cite[Theorem 2.10]{V-09}.

$(iii) \Rightarrow (iv)$:
For $F \in \Delta _G$ which is not a facet, set $H:=G \setminus N_G[F]$.
If $H$ has no edge, every vertex $v \in V(H)$ satisfies  $\deg_{H}(v) = 0$.
Therefore, we may assume that the bipartite graph $H$ has an edge.
Applying  \cite[Theoerm 3.3]{V-V-08} repeatedly, we know $H$ is sequentially Cohen-Macaulay.
Hence by \cite[Lemma 3.9]{V-V-08} there is a vertex $v \in V(H)$  such that $\deg_{H}(v) = 1$.

$(iii) \Rightarrow (v)$: This is trivial.

$(iv) \Rightarrow (v)$: Suppose for any $F \in \Delta _G$ which is
not a facet, there is a vertex $v \in H:= G \setminus N_G[F]$  such
that $\deg_{H}(v) \le 1$. Hence $N_{H}[v]$ is the 1-complete graph
or the 2-complete graph. This means $v$ is a simplicial vertex in
$H$. By Theorem \ref{chordlessgeneral} (ii), $G$ is sequentially
$S_2$.

$(v) \Rightarrow (ii)$:  The proof is by induction on the number of
vertices of $G$. Now let $G$ be a sequentially $S_2$ graph. By Lemma
\ref{lem:degree one} there exists a degree one vertex $x_1 \in
V(G)$. Assume that $N_G(x_1)=\{y_1\}$. Let $G_1=G\setminus N_G[x_1]$
and $G_2=G\setminus N_G[y_1].$ By Lemma \ref{lem:sequential-graph}
both of these graphs are sequentially $S_2$, hence by the induction
hypothesis they are both shellable. Therefore, by \cite[Theorem
2.9]{V-V-08} $G$ is shellable.

\end{proof}

In \cite[Theorem 1.1]{Wo-08} it is shown that a graph $G$ with no
chordless cycles of length other than 3 or 5 is sequentially
Cohen-Macaulay. In the following we extend this result on a larger
class graphs for the sequentially $S_2$ property.

\begin{thm}\label{chordlesswisker}
Let $G$ be a graph. Suppose that a vertex in each chordless even
cycle in $G$ has a whisker. Then $G$ is sequentially $S_2$.
\end{thm}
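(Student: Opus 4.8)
The plan is to reduce everything to Theorem \ref{chordlessgeneral}. By that theorem it suffices to show that for every face $F \in \Delta_G$ which is not a facet, the induced subgraph $H = G \setminus N_G[F]$ satisfies condition (i) or condition (ii). So I would fix such an $F$, set $H = G \setminus N_G[F]$, and split into two cases. If $H$ has no chordless even cycle, then condition (i) holds immediately. Hence I may assume $H$ contains a chordless even cycle $C$, and the goal becomes producing a simplicial vertex in $H$ so that condition (ii) holds.

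The first key observation is that $H$ is the induced subgraph of $G$ on $V(G)\setminus N_G[F]$, so a cycle that is chordless in $H$ is also chordless in $G$; thus $C$ is a chordless even cycle of $G$ and the whisker hypothesis applies to it. Accordingly there is a vertex $v \in V(C)$ carrying a whisker, i.e. a vertex $w \in V(G)$ with $\deg_G(w)=1$ and $N_G(w)=\{v\}$. The crux of the argument is to verify that $w$ survives in $H$ and stays a leaf there. Since $v \in V(C) \subseteq V(H)$, we have $v \notin N_G[F]$, so in particular $v \notin F$ and $v \notin N_G(F)$. If $w \in F$, then $v = N_G(w) \subseteq N_G(F)$, contradicting $v \notin N_G(F)$; and if $w \in N_G(F)$, then $w$ is adjacent to some $f \in F$, forcing $f = v$ (as $v$ is the only neighbor of $w$) and hence $v \in F$, again a contradiction. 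Therefore $w \notin N_G[F]$, so $w \in V(H)$, and because its unique neighbor $v$ also lies in $H$ we get $\deg_H(w)=1$.

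It then follows that $N_H[w] = \{w,v\}$ induces the complete graph $K_2$, so $w$ is a simplicial vertex of $H$ and condition (ii) is satisfied. In either case $H$ meets the hypotheses of Theorem \ref{chordlessgeneral}, which yields that $G$ is sequentially $S_2$. The only delicate point is the bookkeeping in the middle paragraph showing that the pendant vertex $w$ is not deleted when passing from $G$ to $H$; this rests entirely on the fact that the single neighbor $v$ of $w$ lies on the cycle $C \subseteq H$ and is therefore itself undeleted, which is what prevents $w$ from being swept into $N_G[F]$.
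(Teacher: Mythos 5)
Your proof is correct, but it takes a genuinely different route from the paper. The paper proves Theorem \ref{chordlesswisker} by re-running the whole induction of Theorem \ref{chordlessgeneral} from scratch: it inducts on the number of vertices, handles the links via the induction hypothesis, and then establishes connectivity of each pure skeleton $\Delta^{[i]}$ directly by building explicit chains of $i$-faces, using the pendant whisker vertex $z$ as a stepping stone between two disjoint $i$-faces $X$ and $Y$ (and falling back on the forest argument for $G_{X\cup Y}$ when no vertex of $X\cup Y$ carries a whisker). You instead treat Theorem \ref{chordlessgeneral} as a black box and simply verify that every $H=G\setminus N_G[F]$ satisfies condition (i) or condition (ii): if $H$ has a chordless even cycle $C$, then since $H$ is an induced subgraph, $C$ is chordless in $G$, the whisker hypothesis produces a pendant $w$ attached to some $v\in V(C)$, and your bookkeeping correctly shows $w\notin N_G[F]$ (because its unique neighbor $v$ survives in $H$), so $N_H[w]=\{w,v\}$ is complete and $w$ is simplicial in $H$. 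This is a valid and complete argument; the one point worth double-checking is exactly the one you flagged, and your handling of it is right. Your reduction is shorter and more modular, and it makes clear that Theorem \ref{chordlesswisker} is a formal consequence of conditions (i) and (ii) of Theorem \ref{chordlessgeneral} alone; the paper's self-contained proof duplicates the chain-building machinery but does not rely on the earlier theorem's statement.
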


\begin{proof}
We prove the theorem by induction on $n$, the number of vertices of
$G$. The assertion holds for $n \le 3$. Now we assume $n \ge 4$. Set
$\Delta=\Delta_G$ and let $x \in V$. Since $G\setminus N_G[x]$
satisfies the condition of the theorem, it is sequentially $S_2$ by
the induction hypothesis. Hence by Theorem \ref{thm:localcriteria}
it is enough to show that $\Delta ^{[i]}$ is connected for $1 \le i
\le \dim \Delta$. We show that for any $X, Y \in \Delta$ with $\dim
X= \dim Y=i$, there is a chain $X=X_0,X_1, \dots , X_s=Y$ of
$i$-faces of $\Delta$ such that $X_{j-1} \cap X_{j} \ne \emptyset$
for $j=1,2, \dots , s$. We may assume $X \cap Y = \emptyset$. For
simplicity we set $X=\{x_1,x_2, \dots, x_{i+1}\}$ and $Y=
\{y_1,y_2,\dots, y_{i+1}\}$. Let $x_1$ have a whisker, that is,
there exists $z \in V(G)$ such that $\deg z=1$ and $\{x_1, z\} \in
E$. Assume $z \not\in Y$. Set $X_1=\{z,x_2,\dots, x_{i+1}\}$ and
$X_2= \{z,y_2,\dots, y_{i+1}\}$. Then $X, X_1, X_2, Y$ is a desired
chain. Assume $z=y_1 \in Y$. Then $X, X_1, Y$ is a desired chain.

Hence we may assume that no vertex in $X \cup Y$ has a whisker in
$G$. Set $B=G_{X \cup Y}$, the restriction of $G$ to $X \cup Y$.
Then $B$ is a bipartite graph on the partition $X \cup Y$. Since $B$
is bipartite,  $B$ has no odd cycle. Since any vertices in $X \cup
Y$ do not have a whisker in $G$, $B$ has no (chordless) even cycle.
Hence $B$ is a forest. Then there exists a vertex with degree 0 or 1
in $B$. We may assume that $x_1$ is such a vertex and that $x_1$ is
connected at most to $y_1$. Set $X_1=\{y_1,x_2,\dots, x_{i+1}\}$.
Then $X, X_1, Y$ is a desired chain.

\end{proof}

\begin{cor}\label{chordless}

If $G$ is a graph with no chordless even cycle, then $G$ is
sequentially $S_2$.

\end{cor}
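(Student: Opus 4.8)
The plan is to derive this immediately from Theorem \ref{chordlesswisker}. That theorem asserts that $G$ is sequentially $S_2$ whenever a vertex in each chordless even cycle of $G$ carries a whisker. If $G$ has no chordless even cycle at all, then this hypothesis is satisfied \emph{vacuously}: there is simply no chordless even cycle for which the whisker condition could fail. Consequently Theorem \ref{chordlesswisker} applies and yields that $G$ is sequentially $S_2$. This is a one-line appeal, so I expect essentially no obstacle to overcome here; the entire content of the statement is already packaged inside the preceding theorem, and the only thing to check is that an empty hypothesis is trivially true.

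Alternatively, I would note that the same conclusion follows from Theorem \ref{chordlessgeneral}(i). The key observation for this second route is that the property of having no chordless even cycle is inherited by induced subgraphs: for any independent set $F \in \Delta_G$ which is not a facet, the graph $H = G \setminus N_G[F]$ is an induced subgraph of $G$, and any chordless cycle in $H$ is automatically a chordless cycle in $G$. Hence if $G$ has no chordless even cycle, then neither does $H$ for any such $F$, so condition (i) of Theorem \ref{chordlessgeneral} holds for every non-facet $F$, and we again conclude that $G$ is sequentially $S_2$.

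I would present the first route as the proof proper, since it is the most direct, and mention the second only as an aside. If anything requires a word of care, it is the inheritance claim used in the alternative argument, namely that chordlessness of a cycle is preserved when passing to the induced subgraph $G \setminus N_G[F]$; but this is immediate from the definition of an induced subgraph, so no genuine difficulty arises in either approach.
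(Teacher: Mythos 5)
Your proof is correct and matches the paper's (implicit) argument: the corollary is stated without proof precisely because the hypothesis of Theorem \ref{chordlesswisker} is vacuously satisfied when $G$ has no chordless even cycle. The alternative route via Theorem \ref{chordlessgeneral}(i) is also valid, since having no chordless even cycle is inherited by the induced subgraphs $G \setminus N_G[F]$.
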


\begin{rem}

Corollary \ref{chordless} gives an alternative proof for the fact
that any odd cycle is sequentially $S_2$, the significant part of
Theorem \ref{cycles}.

\end{rem}

We end this section by proposing two questions.\\

Let $\Delta$ and $\Gamma$ be two simplicial complexes over disjoint
vertex sets. In \cite{STY} it is shown that $\Delta \ast \Gamma$ is
sequentially Cohen-Macaulay if and only if $\Delta$ and $\Gamma$ are
both sequentially Cohen-Macaulay. By \cite[Theorem
6]{Tousi-Yassemi}, it follows that for $r\le t$, if $\Delta$ is
$S_r$ but not $S_{r+1}$ and $\Gamma$ is $S_t$ then $\Delta \ast
\Gamma$ is $S_r$ but not $S_{r+1}$.  One may study similar question
for sequentially $S_2$ complexes.

\begin{ques}

Let $\Delta$ and $\Gamma$ be two simplicial complexes. Is it true
that $\Delta \ast\Gamma$ is sequentially $S_2$ if and only if
$\Delta$ and $\Gamma$ are both sequentially $S_2$?

\end{ques}

In particular, it is tempting to show that the join of the
simplicial complexes of two disjoint odd cycles is $S_2$.\\

Munkres \cite[Theorem 3.1]{Mu-84} showed that Cohen-Macaulayness of
a simplicial complex is a topological property. Stanley \cite[Chap.
III, Proposition 2.10]{St-95} proved that sequentially
Cohen-Macaulayness is also a topological property. Recently,
Yanagawa \cite[Theorem 4.5(d)]{Ya-09} proved that Serre's condition
$S_r$ is a topological property as well. Therefore it is natural to
pose the following question.

\begin{ques}

Is sequentially $S_r$ a topological property on simplicial
complexes?

\end{ques}

\end{document}